\documentclass[11pt]{amsart}
\usepackage[T1]{fontenc}
\usepackage[utf8]{inputenc}
\usepackage{lmodern}
\usepackage{amsmath,amssymb,mathtools,mathrsfs}
\usepackage[margin=1.05in]{geometry}
\usepackage{microtype}
\usepackage[colorlinks=true,linkcolor=blue,citecolor=blue,urlcolor=blue]{hyperref}
\hypersetup{pdftitle={Morse index, topology, and ends of minimal surfaces with noncompact free boundary},pdfauthor={Marcio Batista and Matheus B. Martins}}
\numberwithin{equation}{section}
\newtheorem{theorem}{Theorem}[section]
\newtheorem{proposition}[theorem]{Proposition}
\newtheorem{lemma}[theorem]{Lemma}
\newtheorem{corollary}[theorem]{Corollary}
\newtheorem*{theoremA}{Theorem A}
\newtheorem*{theoremB}{Theorem B}
\newtheorem*{theoremC}{Theorem C}
\theoremstyle{definition}

\newtheorem{example}[theorem]{Example}
\theoremstyle{remark}
\newtheorem{remark}[theorem]{Remark}
\newcommand{\R}{\mathbb R}
\newcommand{\C}{\mathbb C}
\newcommand{\HH}{\mathcal H}
\newcommand{\EE}{\mathcal E}
\newcommand{\KK}{\mathcal K}
\newcommand{\NN}{\mathcal N}
\newcommand{\Sigmabar}{\overline\Sigma}
\newcommand{\dd}{\,d}
\DeclareMathOperator{\Ind}{Ind}
\DeclareMathOperator{\diver}{div}
\DeclareMathOperator{\supp}{supp}
\DeclareMathOperator{\tr}{tr}

\DeclareMathOperator{\spanop}{span}
\DeclareMathOperator{\Rea}{Re}
\DeclareMathOperator{\Ima}{Im}

\title[Morse index and noncompact free boundary]{Morse index, topology, and ends of minimal surfaces with noncompact free boundary}
\author{M\'arcio Batista}
\address{CPMAT--Instituto de Matemática, Universidade Federal de Alagoas, Macei\'o, AL, 57072-970, Brazil}
\email{mhbs@mat.ufal.br}
\author{Matheus B. Martins}
\address{Departamento de Matemática, Universidade Estadual de Alagoas, Arapiraca, 
AL, 57.312-270, Brazil}
\email{matheus.martins@uneal.edu.br}
\date{\today}
\subjclass[2020]{Primary 53A10; Secondary 53C42, 58J50}
\keywords{Free boundary minimal surfaces, Morse index, harmonic one-forms, conformal compactification}
\begin{document}
\begin{abstract}
We establish index estimates for complete two-sided free boundary minimal surfaces in smooth mean-convex domains of $\R^3$ with noncompact boundary. We first prove $3\Ind_s(\Sigma)\ge 2g+b-1$, where $g$ and $b$ describe the conformal compactification. We then include all interior ends with their multiplicities, without further asymptotic assumptions, and selected boundary ends under an explicit condition ensuring vanishing cutoff errors. The proof combines a localized energy identity with a Riemann--Roch count on the conformal double. A boundary puncture at which the chosen forms are regular eliminates the exceptional space; if poles are allowed at every boundary puncture, this space has dimension at most one. We provide the mixed cutoff construction, examples distinguishing embedded boundary ends from reflected planar ends, and a separate analysis of the conformal Jacobi metric. The latter yields finite Dirichlet energy of the logarithmic conformal factor, but the unrestricted boundary-end estimate remains an open step. The compact-boundary case was treated by Cavalcante, Mendes, and dos Santos.
\end{abstract}
\maketitle
\enlargethispage{2pt}

\section{Introduction}\label{sec:introduction}
The Morse index measures the number of independent compactly supported variations that decrease the area of a minimal surface to second order. A central problem in geometric analysis is to relate this variational invariant to topology. Harmonic one-forms provide a particularly effective link: their dimension records topological information, while their Euclidean components yield scalar test functions for the Jacobi quadratic form. The difficulty is to turn the resulting energy identities into strictly negative directions, especially on a complete noncompact surface where the test functions need not have compact support.

For complete minimal surfaces without boundary, Fischer-Colbrie \cite{FC85} established a fundamental connection between finite index and conformal structure. Ros \cite{Ros06} developed the use of harmonic one-forms together
with a rigidity analysis of the forms whose components become Jacobi
functions. In a related direction, Savo \cite{Savo10} compared the
spectrum of the stability operator with that of the Hodge Laplacian
on one-forms for closed minimal hypersurfaces of the sphere.
Chodosh and Maximo \cite{CM16,CM23} combined the harmonic-form
approach with weighted spaces and carefully chosen cutoff functions
to obtain quantitative estimates involving genus and ends; their
later work also accounts for end multiplicity. These results show both the effectiveness of the harmonic-form method and the importance of controlling its equality case. An estimate involving the index and the nullity is often accessible before one understands which harmonic forms can actually produce null directions.

In the free boundary setting, the second variation contains a Robin boundary term determined by the second fundamental form of the supporting hypersurface. Sargent \cite{Sargent17} and Ambrozio, Carlotto, and Sharp \cite{ACS18} obtained topological index estimates for compact free boundary minimal hypersurfaces in Euclidean domains. Related arguments for constant mean curvature surfaces were developed by Cavalcante and de Oliveira \cite{CO20} and Aiex and Hong \cite{AH21}. Hong and Saturnino \cite{HS23} treated capillary surfaces, including finite-index conformal compactification in the noncompact case. Their Theorem~1.4 is especially relevant here: under the curvature assumptions satisfied by a minimal surface in a mean-convex Euclidean domain, the compactification may have punctures on its boundary as well as in its interior.

\emph{The case of complete noncompact free boundary minimal surfaces with compact boundary is treated by Cavalcante, Mendes, and dos Santos \cite{CMS26}.} Their compact-boundary setting involves removal of finitely many interior points from a compact bordered surface. This compact-boundary contribution is also described in the introduction of Mendes \cite{Mendes26}. Our concern is the additional case in which the geometric boundary itself is noncompact.

A further relevant observation comes from Mendes \cite{Mendes26}, who studies a planar Robin problem with compact boundary. His equality analysis uses the fact that a tangential harmonic vector field satisfying the vector Robin condition has constant tangential component along each boundary component. In the present setting, this conclusion follows intrinsically from the divergence equation and therefore applies also to nonflat surfaces. A boundary end then supplies an additional obstruction: a smooth one-form on the conformal compactification has finite absolute integral along an arc approaching a puncture, whereas a nonzero constant multiple of intrinsic arclength has infinite integral along a complete escaping boundary arc.

We state the result using the sign convention
\[
h^{\partial\Omega}(U,V)=\langle D_U\nu,V\rangle,
\qquad H_{\partial\Omega}=\tr h^{\partial\Omega},
\]
where $\nu$ is the outward unit normal. Thus a Euclidean sphere has positive mean curvature with this convention. The index $\Ind_s$ refers to all compactly supported scalar variations, with no volume constraint.

\begin{theoremA}\label{thm:main}
Let $\Omega\subset\R^3$ be a smooth mean-convex domain, and let
$F:\Sigma\to\overline\Omega$ be a smooth, connected, two-sided free boundary minimal immersion. Assume that the induced metric is complete as a metric space including $\partial\Sigma$, that $\partial\Sigma$ is noncompact, and that $\Ind_s(\Sigma)<\infty$. Then $\Sigma$ is conformally equivalent to
\(
\Sigmabar\setminus(P_{\mathrm{int}}\cup P_{\partial}),
\)
where $\Sigmabar$ is a compact connected bordered Riemann surface and the two puncture sets are finite, with $P_{\partial}\ne\varnothing$. If $g$ is the genus of $\Sigmabar$ and $b$ is the number of components of $\partial\Sigmabar$, then
\begin{equation}\label{eq:main}
\Ind_s(\Sigma)\ge
\left\lceil\frac{2g+b-1}{3}\right\rceil.
\end{equation}
In particular,
\[
\Ind_w(\Sigma)\ge
\left\lceil\frac{2g+b-1}{3}\right\rceil-1.
\]
\end{theoremA}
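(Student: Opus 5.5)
The proof has two parts. First we get the conformal structure from finite index. Then we run the harmonic one-form argument (Ros, Ambrozio--Carlotto--Sharp) on that structure, using Riemann--Roch on the conformal double and a cutoff argument.

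\emph{Conformal structure.} Since $\Ind_s(\Sigma)<\infty$, there is a compact set outside of which $\Sigma$ is stable for the Jacobi form
\[
Q(f,f)=\int_\Sigma |\nabla f|^2-|A|^2f^2-\int_{\partial\Sigma}h^{\partial\Omega}(N,N)f^2 .
\]
We want to apply Hong--Saturnino \cite[Theorem~1.4]{HS23}. The minimal surface and the mean-convex domain satisfy its curvature hypotheses. This gives finite total curvature in the appropriate sense and conformal equivalence with $\Sigmabar\setminus(P_{\mathrm{int}}\cup P_\partial)$. Because $\partial\Sigma$ is noncompact and the metric is complete up to the boundary, some escaping boundary arc must end at a boundary puncture, so $P_\partial\neq\varnothing$.

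\emph{Space of forms.} Let $\widehat\Sigma$ be the Schottky double of $\Sigmabar$, which has genus $2g+b-1$. Take the space $\HH$ of holomorphic one-forms on $\widehat\Sigma$ that are anti-invariant, or invariant, under the antiholomorphic involution. On $\Sigmabar$ these restrict to harmonic one-forms $\omega$ with $\omega|_{\partial\Sigmabar}$ tangential, or normal, as needed to match the Robin condition. By Riemann--Roch this space has real dimension $2g+b-1$.

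\emph{Test functions and the index bound.} For each $\omega\in\HH$, form the dual vector field $X=\omega^\sharp$, push it to $\R^3$ by $dF$, and use the three Euclidean components $u_i=\langle dF(X),N\times e_i\rangle$, or the analogous components, as test functions. The key computation is a Bochner/Simons-type identity
\[
\sum_{i=1}^3 Q(u_i,u_i)=-\int_{\partial\Sigma}\bigl(H_{\partial\Omega}-h^{\partial\Omega}(\nu_\Sigma,\nu_\Sigma)\bigr)|X|^2\le 0
\]
for compactly supported versions. I expect the earlier localized energy identity to supply this, with the boundary term having a sign because $\Omega$ is mean-convex. Because the $u_i$ are not compactly supported, we replace them by $\varphi_R u_i$ with logarithmic cutoffs $\varphi_R$. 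The $L^2$ energy of $\omega$ is conformally invariant and finite near punctures where $\omega$ is regular, so the cutoff errors are $\int|\nabla\varphi_R|^2|X|^2\to 0$.

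With this identity, the standard linear algebra applies. If $\Ind_s(\Sigma)<\lceil(2g+b-1)/3\rceil$, the map $\omega\mapsto(u_1,u_2,u_3)$ composed with projection onto the negative eigenspaces has a kernel of dimension $\ge 2g+b-1-3\Ind_s$, which is $\ge 1$. For $\omega$ in this kernel, equality forces each $u_i$ to be a Jacobi function. Equality also forces $(H_{\partial\Omega}-h(\nu,\nu))|X|^2=0$ along $\partial\Sigma$. The bound for $\Ind_w$ follows because restricting to the codimension-one space of mean-zero variations lowers the index by at most one.

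\emph{Main obstacle: the equality case.} The hard step is excluding this equality case. Following the Mendes-type observation, the equality conditions together with the divergence equation force the tangential component of $X$ to be constant along each boundary component, say $\omega(\partial_s)=c$ with $c$ constant. Now choose the forms to be regular at one point $p\in P_\partial$; we would try to arrange this first, using Riemann--Roch to keep the dimension. Then $\int|\omega(\partial_s)|\,ds$ is finite along a boundary arc approaching $p$. That arc has infinite intrinsic length by completeness, so $c=0$. Given $c=0$, the Robin condition makes $\omega$ vanish to higher order on the boundary, and unique continuation for harmonic forms gives $\omega\equiv0$. This contradicts $\omega\ne0$ and proves \eqref{eq:main}.

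The delicate points are whether regularity at a boundary puncture can be imposed without losing dimension, and whether the cutoff errors vanish near interior punctures where $\omega$ may have poles. For the latter, I would restrict to forms holomorphic at all punctures, which is enough for the count $2g+b-1$.
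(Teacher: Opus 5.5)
Your route is the paper's: Hong--Saturnino compactification, a $(2g+b-1)$-dimensional space of harmonic forms smooth on $\Sigmabar$, cut-off Euclidean components as test functions, reduction of the kernel to forms whose components are Jacobi fields with the Robin condition, and exclusion of these forms via $X=c_\Gamma T$ together with finite $\int|\omega(\alpha')|\,dt$ versus infinite boundary length. However, the identity you display is wrong precisely at the step where mean-convexity enters.

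The conormal of $\Sigma$ equals $\nu$, which is normal to $\partial\Omega$. So $h^{\partial\Omega}(\nu_\Sigma,\nu_\Sigma)$ only makes sense for $\nu_\Sigma=N$, and then your coefficient is $H_{\partial\Omega}-q=k=h^{\partial\Omega}(T,T)$. Mean-convexity does not give $k\ge0$: let $\partial\Sigma$ run along the negatively curved direction of a saddle-shaped but mean-convex piece of $\partial\Omega$. As stated, your sign claim would therefore need a convexity hypothesis. The formula has lost the Robin term of $Q$. Integrating $\tfrac12\varphi^2\Delta|X|^2$ by parts produces $\tfrac12\partial_\eta|X|^2=-k|X|^2$, using $X=fT$ and the symmetry of $\nabla X$. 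The term $-\int_{\partial\Sigma}q\varphi^2|X|^2$ already present in $Q$ adds $-q|X|^2$. The boundary coefficient is therefore $-(k+q)=-H_{\partial\Omega}$, as in \eqref{eq:localized}, and only then does mean-convexity give the sign.

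The rest needs only tightening.
\begin{itemize}
\item \emph{Your two ``delicate points'' are settled by your own construction.} Involution-invariant holomorphic differentials on the compact double have no poles. Their restrictions are therefore smooth on $\Sigmabar$, regular at every interior and boundary puncture, and in $L^2(\gamma)$ by conformal invariance of $|\omega|^2\,dA$. This direct construction is the case $L=0$ of Lemma~\ref{lem:meromorphic-dimension}. It even spares you the paper's proof that every $L^2$ absolute form extends across the punctures, since a lower bound needs only this inclusion.
\item \emph{Use the invariant (absolute) forms.} With either choice of components, the equality analysis then yields $X=\pm c_\Gamma T$ and your argument runs verbatim. For anti-invariant forms, the tangential component of $X$ vanishes identically, so the argument must be run for $\star\omega$.
\item \emph{Take the cutoffs from the intrinsic distance.} The error is then at most $\sup|\nabla\varphi_R|^2\|\omega\|_{L^2}^2$. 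Cutoffs built in conformal coordinates at a boundary puncture carry an extra factor $e^{-2\rho}$, which is not controlled a priori there.
\item \emph{Use a fixed negative-definite space instead of projecting onto negative eigenspaces.} Pair against a fixed compactly supported negative-definite space and apply Cauchy--Schwarz on its $Q$-orthogonal complement, as in Lemma~\ref{lem:negative-space}. This is what turns the vanishing localized energies into $Ju_i=0$ and $Bu_i=0$.
\end{itemize}
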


Here the free boundary condition includes
$F(\Sigma)\cap\partial\Omega=F(\partial\Sigma)$ and orthogonal intersection with the support. The number $b$ counts boundary circles of the compactification, not the connected components of the geometric boundary after punctures have been removed. 

There are two distinct difficulties associated with noncompact boundary. First, integration by parts at infinity must retain the Robin term without imposing a global trace estimate on the original one-form. We resolve this through an exact identity with compactly supported cutoffs. Second, the conformal compactification has boundary punctures, so the dimension of the harmonic-form space cannot be obtained by filling only interior ends. We prove removal of both types of punctures in detail. After these steps, noncompactness of the boundary becomes useful: together with completeness it eliminates the exceptional harmonic forms and removes the nullity term from the index estimate.

Theorem~A uses only square-integrable forms, which extend across all punctures. To detect ends, we allow prescribed poles. Write
\[
P_{\mathrm{int}}=\{p_1,\ldots,p_r\},\qquad
P_{\partial}=\{q_1,\ldots,q_s\},\qquad s\ge1.
\]
The finite-curvature conclusion of Hong--Saturnino implies that each interior end has the classical integer multiplicity $d_i\ge1$. In a conformal coordinate $z$ at $p_i$, its metric is comparable to $|z|^{-2(d_i+1)}|dz|^2$. Thus the interior ends can be included without additional hypotheses on the boundary ends.

\begin{theoremB}
Under the assumptions of Theorem~A, if $r\ge1$, then
\begin{equation}\label{eq:interior-main}
\Ind_s(\Sigma)\ge
\left\lceil\frac{2g+b-2+2\sum_{i=1}^r(d_i+1)}{3}\right\rceil.
\end{equation}
In particular, if all interior ends are embedded, then
$3\Ind_s(\Sigma)\ge 2g+b+4r-2$.
For $r=0$, Theorem~A applies.
\end{theoremB}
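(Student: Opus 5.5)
The plan is to rerun the proof of Theorem~A with a larger space of harmonic one-forms: we allow poles at the interior punctures $p_1,\dots,p_r$, of order at most $d_i+1$ at $p_i$. The bound on the order comes from the growth of the metric, which is comparable to $|z|^{-2(d_i+1)}|dz|^2$. Boundary punctures are treated exactly as in Theorem~A, with regular forms there, so no condition on the boundary ends is needed.

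\emph{Counting.} Let $\widehat\Sigma$ be the conformal double of $\Sigmabar$, of genus $G=2g+b-1$, with antiholomorphic involution $\tau$. Consider the divisor $D=\sum_i (d_i+1)(p_i+\tau p_i)$, of degree $2\sum_i(d_i+1)>0$. By Riemann--Roch, $\dim_\C H^0(K_{\widehat\Sigma}+D)=G-1+\deg D=2g+b-2+2\sum_i(d_i+1)$, where the $-1$ is the residue theorem. The divisor $D$ is $\tau$-invariant, so $\tau$ induces a real structure on this space. The real forms on $\Sigmabar$ whose double satisfies the free boundary (Neumann-type) reflection condition form a real subspace $\HH_D$ of dimension $2g+b-2+2\sum_i(d_i+1)$; the residue count is the same as in the compact-boundary setting. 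For each $\omega\in\HH_D$, let $X_\omega$ be the metric dual of $\Rea\omega$, a tangential harmonic vector field. In the coordinate $z$ near $p_i$ we have $|X_\omega|_g\sim|z|^{d_i+1}|f(z)|$, where $\omega=f\,dz$. Since $f$ has a pole of order at most $d_i+1$, the field $X_\omega$ is bounded near every interior end. At boundary punctures it is also bounded, because $\omega$ is regular there and the compactification is as in Theorem~A.

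\emph{Energy.} The test functions are the components $u_j=\langle X_\omega,e_j\rangle$, $j=1,2,3$. We apply the localized energy identity established for Theorem~A with compactly supported cutoffs. At interior ends we use logarithmic cutoffs $\varphi_R$ in the conformal coordinate. The Dirichlet energy of the cutoff is conformally invariant and tends to zero. The cross terms involve $u_j$, which is bounded, and $\nabla u_j$. For the latter we use finite total curvature (Hong--Saturnino) to get $|\nabla X_\omega|\in L^2$ near $p_i$; this follows from the Bochner identity for harmonic fields and the integrability of $|A|^2$. The summed identity then gives $\sum_j Q(\varphi_R u_j,\varphi_R u_j)\to-\int_\Sigma(\dots)\le0$, with the Robin term retained by mean-convexity, exactly as in Theorem~A. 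If $3\Ind_s(\Sigma)<\dim\HH_D$, a linear-algebra argument produces $\omega\ne0$ with all three limits $Q$-orthogonal to the negative space. Then each $u_j$ is a bounded Jacobi field with vanishing defect terms.

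\emph{Rigidity (main obstacle).} We must show that no such $\omega\ne0$ exists. As in Theorem~A, the equality case forces the tangential harmonic field to have constant tangential component along each boundary arc. Hence, along a complete escaping boundary arc ending at $q\in P_\partial$, $\Rea\omega$ is a constant multiple of intrinsic arclength. The form $\omega$ is smooth at $q$, so its integral along the arc is finite, while arclength diverges; thus the constant is zero. Unique continuation for the Cauchy data then gives $\omega\equiv0$. The delicate point is justifying the equality analysis when $\omega$ has poles. I would first check that the pointwise equality conditions hold on $\Sigma$, which is away from $P_{\mathrm{int}}$, so the poles never enter the boundary argument. The essential new estimate is the $L^2$ bound on $\nabla X_\omega$ near $p_i$, which makes the cutoff errors vanish; it also explains why the pole order is capped at $d_i+1$. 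Once rigidity holds, $3\Ind_s\ge 2g+b-2+2\sum_i(d_i+1)$. For embedded ends $d_i=1$, which gives $2g+b+4r-2$.
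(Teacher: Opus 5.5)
\textbf{Verdict: same route as the paper, but one step is unjustified as written.} Your overall structure matches the paper's proof: Riemann--Roch on the double with a $\tau$-invariant divisor supported only at the interior punctures, bounded induced norm at interior ends from $e^\lambda\asymp|z|^{-d_i-1}$, the localized identity, rank--nullity against a fixed negative space, and elimination of the exceptional space at a boundary puncture where every form is regular. The dimension count and the rigidity step are correct as stated. The step that fails is the cutoff at the boundary ends. You assert that $X_\omega$ is bounded near each $q_j$ because $\omega$ is regular there. In a boundary conformal coordinate, however, $|\omega|_\gamma=e^{-\lambda}|f|$, and smoothness of $f$ gives nothing without a lower bound on $e^{\lambda}$ near the puncture. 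No such bound is available. Hong--Saturnino provide no asymptotic description of boundary ends, which is exactly why Theorem~C needs a separate growth hypothesis at selected boundary ends. So logarithmic cutoffs at the $q_j$ cannot be justified this way, and your proposal does not otherwise say how the cutoff becomes compactly supported near the boundary punctures.

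\textbf{How to repair it.} The missing ingredient is the mixed cutoff of Lemma~\ref{lem:mixed-cutoffs}. A form regular at the boundary punctures is square integrable in the induced metric away from the interior punctures, by conformal invariance of $|\omega|^2\,dA$ and smooth extension. Hence intrinsic distance cutoffs $\theta_R$ control the error there, as in Theorem~A. You cannot use $\theta_R$ alone, though, because $\omega$ fails to be $L^2$ at the interior poles. Instead set $\Phi_\ell=\theta_{R_\ell}\zeta_\ell$, with $\zeta_\ell$ your logarithmic cutoffs at the $p_i$. Choose $R_\ell$ \emph{after} $\zeta_\ell$ so that $C^2R_\ell^{-2}\|\zeta_\ell\omega_a\|_{L^2}^2\le\ell^{-2}$ for a basis $\omega_a$; these norms in general blow up as $\ell\to\infty$, which is why the order of choice matters.

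\textbf{Unnecessary steps.} The localized identity is exact:
\[
\sum_iQ(\Phi u_i,\Phi u_i)=\int_\Sigma|\nabla\Phi|^2|\omega|^2\,dA-\int_{\partial\Sigma}\mathscr H\Phi^2|\omega|^2\,ds .
\]
So there are no cross terms, and the $L^2$ bound on $\nabla X_\omega$ is not needed. The cap $d_i+1$ comes from boundedness of $|\omega|_\gamma$ at $p_i$, not from a gradient estimate. Likewise, the $u_i$ need not be bounded Jacobi fields. Rigidity only uses $Ju_i=0$ and $Bu_i=0$, which follow from $Q(u_i,v)=0$ for all compactly supported $v$ via Cauchy--Schwarz on the $Q$-orthogonal complement of the negative space.
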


For a selection $S\subset\{1,\ldots,s\}$ of boundary ends, choose integers $n_j\ge1$, $j\in S$. A concrete sufficient hypothesis at a selected end is
\begin{equation}\label{eq:intro-growth}
\gamma=e^{2\lambda_j}|dz|^2,\qquad
 e^{\lambda_j(z)}\ge C_j^{-1}|z|^{-n_j},
\end{equation}
uniformly on a punctured closed half-disk. No such hypothesis is made at the unselected ends. Put
\[
L=2\sum_{i=1}^r(d_i+1)+\sum_{j\in S}n_j,
\qquad
\varepsilon_S=
\begin{cases}
0,&S\ne\{1,\ldots,s\},\\
1,&S=\{1,\ldots,s\}.
\end{cases}
\]

\begin{theoremC}
Under the assumptions of Theorem~A and \eqref{eq:intro-growth},
if $L\ge2$, then
\begin{equation}\label{eq:mixed-main}
\Ind_s(\Sigma)\ge
\left\lceil\frac{2g+b-2+L-\varepsilon_S}{3}\right\rceil.
\end{equation}
If, at some selected boundary end, the stronger lower bound
\(
e^{\lambda_j(z)}\ge C_j^{-1}|z|^{-\alpha_j}
\, \text{with }\alpha_j>n_j
\)
holds uniformly near the puncture, then $\varepsilon_S$ is zero in \eqref{eq:mixed-main}.
For $L\le1$, the estimate of Theorem~A applies.
\end{theoremC}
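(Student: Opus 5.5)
The strategy is the same harmonic-form argument as in Theorems~A and~B, but run on a larger space of one-forms: we now allow prescribed poles at the selected boundary punctures. Let $\widehat\Sigma$ be the conformal double of $\Sigmabar$, a closed Riemann surface of genus $\hat g=2g+b-1$, with antiholomorphic involution $\tau$ fixing $\partial\Sigmabar$. Consider holomorphic one-forms $\omega$ on $\widehat\Sigma$ that are allowed
\begin{itemize}
\item poles of order at most $d_i+1$ at $p_i$ and at $\tau(p_i)$, and
\item poles of order at most $n_j$ at $q_j$ for $j\in S$ (each $q_j$ is $\tau$-fixed),
\end{itemize}
and that are compatible with the reflection, i.e.\ $\tau^*\omega=\bar\omega$, which encodes the free boundary condition. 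Call this space $V$.

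\emph{Step 1: dimension count.} By Riemann--Roch on $\widehat\Sigma$, the complex dimension of $H^0(K+D)$ is $\hat g-1+\deg D$, provided $\deg D\ge1$. Here $\deg D=L$. The real structure halves the real dimension, so the $\tau$-compatible real subspace has real dimension $\hat g-1+L$. The residue theorem imposes one real constraint. Single-valuedness of the integrals defining the test data must also hold, i.e.\ periods must be handled; this is automatic for the componentwise construction, since we use forms rather than primitives. The net count is $2g+b-2+L$, which matches the numerator in \eqref{eq:mixed-main} up to $\varepsilon_S$. The hypothesis $L\ge2$ is exactly what makes the count exceed the holomorphic case of Theorem~A.

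\emph{Step 2: test functions and the localized identity.} For $\omega\in V$, let $X$ be the dual tangent vector field and take the Euclidean components $u_k=\langle X,e_k\rangle$ as Jacobi test functions. We apply the localized energy identity established earlier, with mixed cutoffs $\phi_R$: logarithmic cutoffs near interior punctures (as in Theorem~B), and half-disk logarithmic cutoffs near the selected $q_j$. The identity gives
\[
\sum_k Q(\phi_R u_k,\phi_R u_k)=-\int\phi_R^2\bigl(\ldots\bigr)-\int_{\partial\Sigma}\phi_R^2 H_{\partial\Omega}\ldots+\mathcal{E}_R .
\]
The error $\mathcal{E}_R$ is controlled by $\int|\nabla\phi_R|^2|X|^2$, and $|X|_\gamma=|\omega|e^{-\lambda}$. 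At a selected end, $|\omega|\lesssim|z|^{-n_j}$, and \eqref{eq:intro-growth} gives $|X|\lesssim C_j$, a bounded quantity. The conformally invariant Dirichlet energy of a log cutoff then tends to $0$, so $\mathcal{E}_R\to0$. At unselected ends $\omega$ is regular and the argument of Theorem~A applies. Mean convexity makes the boundary term nonpositive.

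\emph{Step 3: standard counting.} If the index satisfies $3\Ind_s<\dim V-\varepsilon_S$, then for a sequence $R\to\infty$ there is a nonzero $\omega\in V$ whose three components are $Q$-orthogonal to the negative space. Passing to the limit forces equality. Then each $u_k$ is a Jacobi field, the Robin condition holds, and $X$ has the rigid structure. The intrinsic divergence argument from the introduction then shows that the tangential component of $X$ is constant along each boundary arc.

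\emph{Step 4: the equality case, which is the main obstacle.} If some unselected boundary puncture exists, then $\omega$ is regular there. Integrating $\omega$ along a complete escaping boundary arc gives a finite integral on one side and an infinite one (arclength times a nonzero constant) on the other unless the constant vanishes. This eliminates the exceptional form, so $\varepsilon_S=0$.

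If all $q_j$ are selected, we only get an exceptional space of dimension at most one, hence the $-\varepsilon_S$. The alternative hypothesis $\alpha_j>n_j$ gives $|X|\lesssim|z|^{\alpha_j-n_j}\to0$ at $q_j$. But a constant nonzero tangential component along the boundary arc would keep $|X|\ge c>0$ there, which is a contradiction. So $\varepsilon_S=0$ in this case as well.

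The delicate points are two. First, verifying that the exceptional space is at most one-dimensional: the boundary constants on the components adjacent to the punctures are linked by continuity across each puncture in the double, so one expects at most one free constant. Second, ensuring the residue and real-structure count is exact. I expect Step 4 to need the most care.
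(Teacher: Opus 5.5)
\textbf{Gap: the bound $\dim\KK(V)\le1$.} Your overall route is the paper's: the double of $\Sigmabar$, Riemann--Roch for the $\sigma$-invariant divisor, log cutoffs at active ends where $|\omega|_\gamma$ is bounded, the reduction to an exceptional space, and the finite-integral/infinite-length and decay obstructions. However, the step that actually produces $\varepsilon_S\le1$ is not proved.

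Your justification of $\dim\KK(V)\le1$ is that boundary constants are ``linked by continuity across each puncture in the double.'' This does not work. At a selected puncture $\omega$ has a pole, so nothing passes continuously across it. Moreover, distinct boundary circles of $\Sigmabar$ are never joined through a puncture, so even granting the heuristic you would get up to $b$ free constants, not one.

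The missing idea is boundary unique continuation. Suppose $X=c_\Gamma T$ on a component $\Gamma$ and $c_\Gamma=0$. Then $\omega$ vanishes as a covector on an open boundary arc, since its normal part already vanishes by the absolute condition. After Schwarz reflection, the holomorphic coefficient vanishes on an interval, so $\omega\equiv0$ by the identity theorem. Hence $\omega\mapsto c_\Gamma$ is injective on $\KK(V)$ for any single fixed component, which gives $\dim\KK(V)\le1$ with no comparison between components.

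Your Step~4 silently needs the same fact. The finite-integral argument and the $\alpha_j>n_j$ decay argument only give $c_\Gamma=0$ on the component containing the chosen arc. It is unique continuation that upgrades this to $\omega=0$.

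\textbf{Two smaller points.} In Step~1 you must not subtract a residue constraint. The ``$-1$'' in $h^0(K+D)=\hat g-1+\deg D$ for $\deg D\ge1$ already is the residue theorem, so an extra real constraint would give $2g+b-3+L$. Your stated net count $2g+b-2+L$ is correct only because you never actually subtracted it.

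In Step~2, the Theorem~A argument cannot be applied literally at unselected ends. Forms in $V$ need not lie in $L^2$: they typically have bounded, nondecaying norm on the infinite-area active ends. A single-parameter family $\phi_R$ therefore does not obviously work. You need the product $\Phi_\ell=\theta_{R_\ell}\zeta_\ell$, using that $\zeta_\ell\omega\in L^2$ for each fixed $\ell$. Then choose $R_\ell$ after $\zeta_\ell$ so that $C^2R_\ell^{-2}\|\zeta_\ell\omega\|_{L^2}^2\to0$.
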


The second term in \eqref{eq:mixed-main} counts only the ends at which poles are allowed. An unselected boundary end is nevertheless useful: smooth extension there forces an exceptional form to vanish. The precise dimension formula and the construction of a single compactly supported cutoff sequence for mixed types of ends are proved below. An integral capacity condition can replace \eqref{eq:intro-growth}; see Lemma~\ref{lem:radial-capacity}. Embeddedness alone does not justify taking $n_j=2$, as the stable strip shows. Such an order is justified for embedded ends admitting reflection across one plane.

The coefficients in these inequalities come from the dimension of the chosen meromorphic-form space and the three Euclidean coordinate tests. We do not claim optimality or an equality classification. We also distinguish the estimates proved here from the remaining question of admitting poles at arbitrary boundary ends under mean-convexity alone.

\textbf{The paper is organized as follows}. In Sections~\ref{sec:preliminaries}--\ref{sec:hodge}, we fix our conventions, describe the conformal compactification, and compute the dimension of the space of absolute $L^2$ harmonic one-forms. In Sections~\ref{sec:energy}--\ref{sec:index}, we derive the localized energy identity, analyze the exceptional forms, and prove Theorem~A. In Section~\ref{sec:ends}, we establish Theorems~B and~C by counting meromorphic differentials and constructing cutoffs adapted to the different types of ends. Finally, in Section~\ref{sec:scope}, we present examples illustrating the hypotheses and an application to ends admitting reflection across a plane.

\section{Variational setting and conformal compactification}\label{sec:preliminaries}
Let $\gamma$ be the induced metric on $\Sigma$, let $N$ be a global unit normal, and set $A(V)=-D_VN$. We use $\Delta=\diver\nabla$. Along $\partial\Sigma$, let $\eta$ be the outward unit conormal and let $T$ be a unit tangent. The free boundary condition gives $dF(\eta)=\nu$. We identify tangent vectors with their images under $dF$ when taking Euclidean inner products. Set
\begin{equation}\label{eq:boundary-notation}
q=h^{\partial\Omega}(N,N),\qquad
k=h^{\partial\Omega}(T,T)=\langle\nabla_T\eta,T\rangle,
\qquad \mathscr H=k+q=H_{\partial\Omega}\circ F.
\end{equation}
In particular, $\mathscr H\ge0$ under our assumptions.

The Jacobi quadratic form is
\begin{equation}\label{eq:quadratic-form}
Q(f,v)=\int_\Sigma\bigl(\langle\nabla f,\nabla v\rangle-|A|^2fv\bigr)\dd A
-\int_{\partial\Sigma}qfv\dd s.
\end{equation}
Unless stated otherwise, at least one argument in this expression has compact support. All coefficients are smooth, so the pairing is then well defined for smooth functions, even if the other argument has no globally finite energy. Integration by parts gives
\[
Q(f,v)=-\int_\Sigma vJf\dd A+\int_{\partial\Sigma}vBf\dd s,
\qquad Jf=\Delta f+|A|^2f,\quad Bf=\partial_\eta f-qf.
\]
The notation $C_c^\infty(\Sigma)$ means smooth functions up to the boundary with compact support in the manifold with boundary. Such functions need not vanish on $\partial\Sigma$.

The strong index is the supremum of the dimensions of subspaces of $C_c^\infty(\Sigma)$ on which $Q$ is negative definite. The weak index is defined with the additional restriction $\int_\Sigma f\dd A=0$. Since that restriction has codimension at most one on any finite-dimensional space,
\begin{equation}\label{eq:weak-strong}
\Ind_w(\Sigma)\le\Ind_s(\Sigma)\le\Ind_w(\Sigma)+1.
\end{equation}
The same strong index is obtained by allowing compactly supported $H^1$ functions. Indeed, on a fixed compact region the trace theorem makes $Q$ continuous in $H^1$, and a basis of a finite-dimensional negative space can be approximated by smooth functions without losing negative definiteness.

\begin{proposition}[Hong--Saturnino]\label{prop:compactification}
Under the assumptions of Theorem~A, there exist a compact connected bordered Riemann surface $\Sigmabar$ and finite sets
$P_{\mathrm{int}}\subset\operatorname{int}\Sigmabar$ and
$P_{\partial}\subset\partial\Sigmabar$ such that $\Sigma$ is conformally equivalent to $\Sigmabar\setminus(P_{\mathrm{int}}\cup P_{\partial})$. Moreover, $P_{\partial}$ is nonempty and
\begin{equation}\label{eq:finite-curvature}
\int_\Sigma |A|^2\dd A+
\int_{\partial\Sigma}\mathscr H\dd s<\infty,
\qquad
\int_\Sigma |K|\dd A<\infty.
\end{equation}
\end{proposition}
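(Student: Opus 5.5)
The plan is to derive the proposition from Hong--Saturnino's Theorem~1.4, after checking that its hypotheses hold in our setting. The one extra input we need is $P_\partial\ne\varnothing$, which comes from the noncompactness of $\partial\Sigma$.

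\emph{Step 1: curvature input.} With our sign convention, $\mathscr H=k+q=H_{\partial\Omega}\circ F\ge0$ because $\Omega$ is mean-convex. The ambient space is flat, so Gauss gives $K=-\tfrac12|A|^2$ for the minimal immersion. These are exactly the assumptions used by Hong--Saturnino for capillary (here free boundary, angle $\pi/2$) surfaces: nonnegative ambient curvature together with nonnegative mean curvature of the support.

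\emph{Step 2: finite total curvature.} Finite $\Ind_s$ gives, following Fischer-Colbrie, a positive solution of $J u=0$ with $Bu=0$ outside a compact set. Hence the metric $u^2\gamma$ is complete and has nonnegative curvature. Hong--Saturnino then obtain $\int_\Sigma|A|^2<\infty$ and $\int_{\partial\Sigma}\mathscr H<\infty$ via a Gauss--Bonnet/cutoff argument on the complete surface with boundary. Since $|K|=\tfrac12|A|^2$, this yields \eqref{eq:finite-curvature}. The conformal type follows from the same source: $\Sigma$ has finite topology, each interior end is a punctured disk, and each boundary end is a punctured half-disk. Gluing in the punctures produces the compact bordered $\Sigmabar$, with interior punctures $P_{\mathrm{int}}$ and boundary punctures $P_\partial$.

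\emph{Step 3: $P_\partial\ne\varnothing$.} Suppose instead that $P_\partial=\varnothing$. Then $\partial\Sigma$ is conformally identified with $\partial\Sigmabar$, a finite union of circles, and is therefore compact. This contradicts the hypothesis that $\partial\Sigma$ is noncompact. Connectedness of $\Sigmabar$ follows from the connectedness of $\Sigma$.

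The main obstacle is confirming that Hong--Saturnino's Theorem~1.4 applies verbatim. This requires checking three things: that their curvature convention matches our $\mathscr H\ge0$; that their notion of completeness including the boundary matches ours; and that their finite-index hypothesis refers to the strong index $\Ind_s$ (rather than a weak index), or is implied by it via \eqref{eq:weak-strong}. If their conclusion delivers only finite topology and total curvature, we would deduce the puncture structure by a Huber-type argument on the conformal double of $\Sigma$ across $\partial\Sigma$.
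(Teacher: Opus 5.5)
Your proposal is correct and follows the paper's own route: apply Hong--Saturnino's Theorem~1.4 with contact angle $\pi/2$, read off the integral bounds using $|K|=|A|^2/2$, and obtain $P_\partial\ne\varnothing$ since otherwise $\partial\Sigma\cong\partial\Sigmabar$ would be compact. The interior hypothesis of that theorem is automatic in flat $\R^3$ with $H_\Sigma=0$, and its boundary hypothesis reduces to $H_{\partial\Omega}\ge0$, so no boundary compactness is needed. Your Step~2 sketch of the Fischer-Colbrie mechanism and the Huber-type fallback are unnecessary once the theorem is cited. In that sketch, completeness of $u^2\gamma$ is the substantive lemma, not an immediate consequence. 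Your weak-versus-strong index concern is moot, because \eqref{eq:weak-strong} makes the two finiteness conditions equivalent.
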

\begin{proof}
Apply \cite[Theorem~1.4]{HS23} with contact angle $\pi/2$. Its interior curvature hypothesis holds because the ambient scalar curvature and the mean curvature of $\Sigma$ vanish. Its alternative boundary hypothesis reduces to $H_{\partial\Omega}\ge0$ along $\partial\Sigma$. Thus boundary compactness is not needed. In this description, interior punctures correspond to interior ends and boundary punctures correspond to boundary ends. If $P_{\partial}$ were empty, the boundary of $\Sigma$ would be diffeomorphic to the compact boundary of $\Sigmabar$, contrary to the hypothesis. The integral conclusion is also part of \cite[Theorem~1.4]{HS23}; the last assertion follows from $K=-|A|^2/2$.
\end{proof}

We henceforth identify $\Sigma$ with the punctured compactification. Choose any smooth metric $\gamma_0$ on $\Sigmabar$ compatible with its conformal structure. On $\Sigma$,
\[
\gamma=e^{2\rho}\gamma_0
\]
for a smooth function $\rho$, which need not extend across the punctures. All subsequent extensions concern differential forms and the conformal structure; the induced metric itself is not assumed to extend.

\section{Absolute harmonic forms and the topology of the compactification}\label{sec:hodge}
We work with real one-forms. Define
\[
\HH^1_{(2),\mathrm{abs}}(\Sigma)=
\left\{\omega\in C^\infty(T^*\Sigma):
 d\omega=0,\ \delta_\gamma\omega=0,\ \omega(\eta)=0,
 \ \int_\Sigma|\omega|_\gamma^2\dd A_\gamma<\infty\right\}.
\]
Smoothness here includes the geometric boundary, away from the removed points. For closed forms, the second absolute boundary condition $\iota_\eta d\omega=0$ is automatic. We use the analogous notation $\HH^1_{\mathrm{abs}}(\Sigmabar,\gamma_0)$ on the compactification.

\begin{lemma}\label{lem:conformal}
The conditions defining $\HH^1_{(2),\mathrm{abs}}$ are invariant under a conformal change of metric on an oriented surface. More precisely,
\begin{equation}\label{eq:conformal-norm}
|\omega|_\gamma^2\dd A_\gamma
=|\omega|_{\gamma_0}^2\dd A_{\gamma_0},
\qquad
\delta_\gamma\omega=e^{-2\rho}\delta_{\gamma_0}\omega,
\qquad
\omega(\eta_\gamma)=e^{-\rho}\omega(\eta_{\gamma_0}).
\end{equation}
\end{lemma}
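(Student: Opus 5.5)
We verify each identity in \eqref{eq:conformal-norm} by a pointwise computation. Write $\gamma=e^{2\rho}\gamma_0$ and work in local $\gamma_0$-orthonormal coframes; near boundary points we use the same computation in a collar. Since all three identities are pointwise, they hold on $\Sigma$ regardless of whether $\rho$ extends across the punctures. Closedness $d\omega=0$ does not involve the metric at all, so only the three listed quantities need checking. Once they are established, the defining conditions of $\HH^1_{(2),\mathrm{abs}}$ for $\gamma$ and for $\gamma_0|_\Sigma$ coincide: $d\omega=0$ is unchanged, $\delta_\gamma\omega=0$ is equivalent to $\delta_{\gamma_0}\omega=0$ because $e^{-2\rho}>0$, $\omega(\eta_\gamma)=0$ is equivalent to $\omega(\eta_{\gamma_0})=0$, and the $L^2$ norms agree.

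\textbf{Norm identity.} The dual metric on covectors scales inversely to the metric, so $|\omega|_\gamma^2=e^{-2\rho}|\omega|_{\gamma_0}^2$. In dimension two the area form scales as $\dd A_\gamma=e^{2\rho}\dd A_{\gamma_0}$, and the factors cancel. Equivalently, $\star_\gamma$ on one-forms is conformally invariant in dimension two, and $|\omega|^2\dd A=\omega\wedge\star\omega$. This invariance uses orientability, which is why the lemma is stated for an oriented surface.

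\textbf{Codifferential.} On one-forms $\delta=-\star d\star$, up to the paper's sign convention, which does not affect the argument. The inner $\star$ on one-forms is conformally invariant. The outer $\star$ acts on $2$-forms and sends $\dd A_\gamma$ to $1$. Since $\dd A_\gamma=e^{2\rho}\dd A_{\gamma_0}$, we get $\star_\gamma=e^{-2\rho}\star_{\gamma_0}$ on $2$-forms. Therefore $\delta_\gamma\omega=e^{-2\rho}\delta_{\gamma_0}\omega$.

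\textbf{Boundary condition.} The outward unit conormals are parallel, since orthogonality to $T\partial\Sigma$ is a conformal notion and both point outward. Unit length then forces $\eta_\gamma=e^{-\rho}\eta_{\gamma_0}$, and linearity of $\omega$ gives $\omega(\eta_\gamma)=e^{-\rho}\omega(\eta_{\gamma_0})$.

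None of these steps is a genuine obstacle. The only care needed is to track the correct powers of $e^\rho$ in dimension two, and to observe that the identity $\star_\gamma=\star_{\gamma_0}$ on one-forms is exactly what makes both the $L^2$ energy and the coclosed condition invariant.
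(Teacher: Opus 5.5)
Your proposal is correct and takes essentially the same approach as the paper. The norm identity comes from the inverse scaling of the dual metric against $dA_\gamma=e^{2\rho}dA_{\gamma_0}$, and the codifferential identity from the weights $\star_\gamma=e^{(2-2k)\rho}\star_{\gamma_0}$ on $k$-forms (invariance for $k=1$, factor $e^{-2\rho}$ for $k=2$) substituted into $\delta=-\star d\star$; the conormal identity and the metric-independence of $d$ are handled exactly as in the paper.
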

\begin{proof}
Since $\gamma=e^{2\rho}\gamma_0$, the inverse metric and area form satisfy
$\gamma^{-1}=e^{-2\rho}\gamma_0^{-1}$ and
$dA_\gamma=e^{2\rho}dA_{\gamma_0}$, proving the first identity.
On $k$-forms in dimension two, the defining relation
$\alpha\wedge\star_\gamma\beta
=\langle\alpha,\beta\rangle_\gamma\,dA_\gamma$ gives
$\star_\gamma=e^{(2-2k)\rho}\star_{\gamma_0}$.
Thus $\delta_\gamma=-\star_\gamma d\star_\gamma$ on one-forms
yields $\delta_\gamma\omega=e^{-2\rho}\delta_{\gamma_0}\omega$.
Finally, conformal changes preserve orthogonality and outward direction,
so normalization gives $\eta_\gamma=e^{-\rho}\eta_{\gamma_0}$.
Closedness is metric-independent because the exterior derivative is.
\end{proof}

On interior removed points we have the following result.
\begin{lemma}\label{lem:holomorphic-removal}
If $f$ is holomorphic on a punctured disk $D^*=\{0<|z|<\rho\}$ and
$\int_{D^*}|f|^2\dd x\dd y<\infty$, then $f$ extends holomorphically across the origin.
\end{lemma}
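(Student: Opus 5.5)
The plan is to use the Laurent expansion on the punctured disk and show that all negative-index coefficients vanish, by combining orthogonality of the monomials $z^n$ on annuli with the finiteness of the $L^2$ norm. Since $f$ is holomorphic on $D^*$, it has a Laurent series
\[
f(z)=\sum_{n\in\mathbb Z}a_nz^n ,
\]
converging locally uniformly on $D^*$. The goal is to prove $a_n=0$ for all $n\le -1$. Once this holds, the series is a power series convergent on $\{|z|<\rho\}$, which defines the holomorphic extension.

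For $0<\varepsilon<r<\rho$, let $A_{\varepsilon,r}=\{\varepsilon<|z|<r\}$. Work in polar coordinates and integrate term by term, which is justified by uniform convergence on the compact closure of the annulus. The functions $e^{in\theta}$ are orthogonal on the circle, so
\[
\int_{A_{\varepsilon,r}}|f|^2\dd x\dd y
=\sum_{n\in\mathbb Z}|a_n|^2\int_\varepsilon^r 2\pi t^{2n+1}\dd t .
\]
Every term here is nonnegative. Hence for each fixed $n\le -1$ we obtain
\[
|a_n|^2\int_\varepsilon^r 2\pi t^{2n+1}\dd t\le \int_{D^*}|f|^2\dd x\dd y<\infty .
\]

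The key step is the divergence of the radial integral as $\varepsilon\to0$ when $n\le-1$. If $n=-1$, the integral equals $2\pi\log(r/\varepsilon)$. If $n\le-2$, it is comparable to $\varepsilon^{2n+2}$. In both cases it tends to infinity. The uniform bound therefore forces $a_n=0$ for every $n\le -1$.

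This completes the argument, and no step is a real obstacle. The only point that needs care is the interchange of summation and integration, and working on the compact annulus $\overline{A_{\varepsilon,r}}$ before letting $\varepsilon\to0$ handles it. Note that the borderline case $n=-1$ is excluded only because of the logarithmic divergence. This is exactly why $L^2$ suffices here, while $L^p$ for smaller $p$ would not.
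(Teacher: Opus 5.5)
Your proof is correct and follows essentially the same route as the paper: Laurent expansion, orthogonality (Parseval) on circles, and divergence of $\int_0 r^{2m+1}\,dr$ for $m\le -1$. The only difference is cosmetic: you work on compact annuli and let $\varepsilon\to0$ to justify the term-by-term integration explicitly, whereas the paper applies Parseval circle by circle and integrates directly over $D^*$.
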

\begin{proof}
Write the Laurent expansion $f(z)=\sum_{m\in\mathbb Z}a_mz^m$. On every circle contained in $D^*$, Parseval's identity yields
\[
\int_0^{2\pi}|f(re^{it})|^2\dd t
=2\pi\sum_{m\in\mathbb Z}|a_m|^2r^{2m}.
\]
For $m\le-1$, a nonzero coefficient $a_m$ would imply
\[
\int_{D^*}|f|^2\dd x\dd y
\ge 2\pi|a_m|^2\int_0^\rho r^{2m+1}\dd r=\infty.
\]
All negative Laurent coefficients therefore vanish.
\end{proof}

Next, we present an extension of the forms along the interior and boundary punctures.
\begin{proposition}\label{prop:hodge-extension}
Let $\Sigmabar$ be a compact connected bordered Riemann surface, and remove finitely many interior and boundary points to obtain $\Sigma$. Equip $\Sigma$ with any smooth metric conformal to a smooth compatible metric $\gamma_0$ on $\Sigmabar$. Restriction induces an isomorphism
\begin{equation}\label{eq:hodge-isomorphism}
\HH^1_{\mathrm{abs}}(\Sigmabar,\gamma_0)
\longrightarrow\HH^1_{(2),\mathrm{abs}}(\Sigma).
\end{equation}
\end{proposition}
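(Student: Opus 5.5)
Restriction is well defined and injective by Lemma~\ref{lem:conformal}. A form in $\HH^1_{\mathrm{abs}}(\Sigmabar,\gamma_0)$ is smooth on the compact surface, so its $\gamma_0$-energy is finite, and this is also its $\gamma$-energy. The equations $d\omega=0$, $\delta\omega=0$ and $\omega(\eta)=0$ are preserved under the conformal change, so the restriction lies in $\HH^1_{(2),\mathrm{abs}}(\Sigma)$. Injectivity is immediate: $\Sigma$ is dense in $\Sigmabar$ and the forms are continuous. The real content is surjectivity. Given $\omega\in\HH^1_{(2),\mathrm{abs}}(\Sigma)$, we must show that $\omega$ extends smoothly across every point of $P_{\mathrm{int}}\cup P_\partial$ as a $\gamma_0$-harmonic field satisfying the absolute condition.

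The plan is to pass to holomorphic data. On an oriented surface, a one-form $\omega$ that is closed and coclosed (for either conformal metric, by Lemma~\ref{lem:conformal}) is locally $\Rea(f\,dz)$ with $f$ holomorphic in a conformal coordinate $z=x+iy$. Indeed, writing $\omega=a\,dx+b\,dy$, the conditions $d\omega=0$ and $d\star\omega=0$ are exactly the Cauchy--Riemann equations for $f=a-ib$. Moreover $|\omega|^2_{\gamma_0}dA_{\gamma_0}$ is comparable to $|f|^2dx\,dy$ on a coordinate disk.

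\emph{Interior punctures.} Near $p\in P_{\mathrm{int}}$ take a coordinate disk. Finite energy gives $f\in L^2(D^*)$, so Lemma~\ref{lem:holomorphic-removal} shows that $f$ extends holomorphically. Hence $\omega$ extends smoothly and remains closed and coclosed.

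\emph{Boundary punctures.} Near $q\in P_\partial$ choose a boundary coordinate onto a half-disk $\{|z|<\rho,\ \Ima z\ge 0\}$ with $\partial\Sigmabar$ corresponding to the real axis. The conormal there is $-\partial_y$ up to a positive factor, so the absolute condition $\omega(\eta)=0$ reads $b=0$ on the real axis; that is, $f$ is real on $\mathbb R\setminus\{0\}$. By the Schwarz reflection principle, $f$ extends holomorphically to the punctured full disk via $f(\bar z)=\overline{f(z)}$. The reflected function is still square integrable, with twice the $L^2$ norm. Lemma~\ref{lem:holomorphic-removal} then removes the singularity, and the extension is still real on the real axis. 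Thus $\omega$ extends smoothly up to the boundary near $q$ and satisfies $\omega(\eta)=0$ there.

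After both steps, the extended form is smooth on $\Sigmabar$, closed and coclosed for $\gamma_0$, and satisfies the absolute condition on all of $\partial\Sigmabar$ by continuity. So it lies in $\HH^1_{\mathrm{abs}}(\Sigmabar,\gamma_0)$ and restricts to $\omega$.

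The main obstacle is the boundary step. One must check that the boundary condition $\omega(\eta)=0$ translates precisely into reality of $f$ on the real axis, which justifies the reflection. One also needs the regularity of $\omega$ up to the boundary away from the puncture; this is assumed in the definition, and it is what makes Schwarz reflection (continuous up to an open real segment) applicable. The rest is routine bookkeeping of the conformal invariance, all contained in Lemma~\ref{lem:conformal}.
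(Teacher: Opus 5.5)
Your proposal is correct and follows essentially the same route as the paper. Both arguments use conformal invariance to reduce to $\gamma_0$ and write closed, coclosed forms as $\Rea(f\,dz)$ with $f$ holomorphic; Lemma~\ref{lem:holomorphic-removal} removes interior punctures, and at boundary punctures the absolute condition makes $f$ real on the punctured real axis, so Schwarz reflection plus the same removability lemma apply, with density giving injectivity. Two small points: in fact $|\omega|_{\gamma_0}^2\,dA_{\gamma_0}=|f|^2\,dx\,dy$ holds exactly, not just up to comparability, and injectivity comes from density rather than from Lemma~\ref{lem:conformal}; neither affects the argument.
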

\begin{proof}
By Lemma~\ref{lem:conformal}, it suffices to use $\gamma_0$. We first prove that a form on the right extends through every puncture.

\emph{Interior punctures.}
Take a conformal coordinate $z=x+iy$ centered at an interior puncture and write
\[
\gamma_0=e^{2\lambda}(\dd x^2+\dd y^2),\qquad
\omega=a\dd x+b\dd y.
\]
The equations $d\omega=0$ and $\delta\omega=0$ read
\[
b_x-a_y=0,\qquad a_x+b_y=0.
\]
So, $f=a-ib$ is holomorphic and
$\omega=\Rea(f\dd z)$. Moreover,
\begin{equation}\label{eq:local-l2}
|\omega|_{\gamma_0}^2\dd A_{\gamma_0}
=(a^2+b^2)\dd x\dd y=|f|^2\dd x\dd y.
\end{equation}
Lemma~\ref{lem:holomorphic-removal} extends $f$ holomorphically, hence $\omega$ smoothly and harmonically, across the puncture.

\emph{Boundary punctures.}
Choose a boundary conformal coordinate in which a neighborhood of the puncture is the upper half-disk, the puncture is $0$, and the geometric boundary corresponds to the real axis with $0$ removed. As above, write $\omega=\Rea(f\dd z)$ with $f=a-ib$ holomorphic. Along the real axis the outward conormal is $-e^{-\lambda}\partial_y$. Thus the absolute condition is exactly
\[
b(x,0)=0,
\]
so $f$ is real on both real boundary intervals. Schwarz reflection defines
\[
\widetilde f(z)=
\begin{cases}
f(z),&\operatorname{Im}z\ge0,\\
\overline{f(\overline z)},&\operatorname{Im}z<0.
\end{cases}
\]
It is holomorphic on the punctured full disk: reflection applies near every point of the real axis other than the puncture. By \eqref{eq:local-l2}, its squared integral is twice that over the upper half-disk, and is finite. Lemma~\ref{lem:holomorphic-removal} removes the remaining singularity. The reflected symmetry persists at $0$, so the extension is real on the entire real interval and the extended form still satisfies the absolute boundary condition.

The extensions are unique and agree on overlaps, since they agree on the original punctured surface. They therefore give a smooth absolute harmonic form on $\Sigmabar$.

Conversely, a smooth absolute harmonic form on $\Sigmabar$ has finite $L^2(\gamma_0)$ norm by compactness. Its restriction has finite $L^2(\gamma)$ norm by \eqref{eq:conformal-norm}, and remains closed, coclosed, and absolute. Finally, restriction is injective because the punctured surface is dense. This proves \eqref{eq:hodge-isomorphism}.
\end{proof}

\begin{corollary}\label{cor:hodge-dimension}
If $\Sigmabar$ has genus $g$ and $b\ge1$ boundary components, then
\begin{equation}\label{eq:hodge-dimension}
\dim_{\R}\HH^1_{(2),\mathrm{abs}}(\Sigma)=2g+b-1.
\end{equation}
\end{corollary}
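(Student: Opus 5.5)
By Proposition~\ref{prop:hodge-extension}, it suffices to compute $\dim_{\R}\HH^1_{\mathrm{abs}}(\Sigmabar,\gamma_0)$ for the compact bordered surface $\Sigmabar$. The punctures then play no further role. Two routes are available. The first is the Hodge--Morrey--Friedrichs theorem: absolute harmonic fields on a compact manifold with boundary represent $H^1_{\mathrm{dR}}(\Sigmabar;\R)$ isomorphically. The second is an elementary argument via the Schottky double. I would give the double argument, since it matches the complex-analytic language of the preceding proof.

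\emph{Step 1 (doubling).} Let $X$ be the Schottky double of $\Sigmabar$. It is a compact Riemann surface without boundary, with an antiholomorphic involution $\sigma$ whose fixed set is $\partial\Sigmabar$. By Riemann--Hurwitz/Euler characteristic, $\chi(X)=2\chi(\Sigmabar)=2(2-2g-b)$, so $X$ has genus $\hat g=2g+b-1$. In local boundary coordinates (upper half-disk, real axis the boundary) an absolute harmonic form is $\Rea(f\,dz)$ with $f$ holomorphic and real on the axis, exactly as in the proof of Proposition~\ref{prop:hodge-extension}. Schwarz reflection extends $f\,dz$ to a holomorphic differential $\phi$ on $X$ satisfying $\sigma^*\phi=\overline{\phi}$.

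\emph{Step 2 (identification).} The map $\omega\mapsto\phi$ is an $\R$-linear isomorphism from $\HH^1_{\mathrm{abs}}(\Sigmabar,\gamma_0)$ onto
\[
V=\{\phi\in H^0(X,K_X):\sigma^*\phi=\overline\phi\}.
\]
The inverse takes $\phi$ to $\Rea\phi$ restricted to $\Sigmabar$. Closedness and coclosedness are automatic, and the reality of $\phi$ on the fixed curves gives $\omega(\eta)=0$. Globally, $\omega$ determines $f\,dz=\omega-i\star\omega$ in each chart, and these local expressions patch to a holomorphic one-form on $\Sigmabar$. I would check this patching carefully, since it is what makes the map well defined.

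\emph{Step 3 (dimension).} The map $\phi\mapsto\overline{\sigma^*\phi}$ is a conjugate-linear involution on the complex vector space $H^0(X,K_X)$, which has dimension $\hat g$. It is therefore a real structure, and its fixed space $V$ is a real form with $V\oplus iV=H^0(X,K_X)$. Hence $\dim_\R V=\hat g=2g+b-1$, which gives \eqref{eq:hodge-dimension}.

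The main point needing care is the correct sign or conjugation convention in the reflection. The condition must read $\sigma^*\phi=\overline\phi$, not $-\overline\phi$, so that it matches the condition $b=0$ (that is, $f$ real). The other choice would yield the relative forms, whose count is the same but whose meaning differs. As a cross-check, $b_1(\Sigmabar)=2g+b-1$, so the count agrees with the Hodge--Morrey--Friedrichs route.
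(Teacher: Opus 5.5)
Your argument is correct, but it is not the route the paper takes for this corollary. The paper follows your ``first route'': after the reduction by Proposition~\ref{prop:hodge-extension}, it invokes the Hodge theorem with absolute boundary conditions to get $\HH^1_{\mathrm{abs}}(\Sigmabar,\gamma_0)\cong H^1_{\mathrm{dR}}(\Sigmabar;\R)$, and then reads off $b_1(\Sigmabar)=1-\chi(\Sigmabar)=2g+b-1$ from $b_0=1$, $b_2=0$.

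Your doubling argument is instead the mechanism the paper uses later, in Lemma~\ref{lem:meromorphic-dimension}. With $D=0$, that lemma identifies the absolute forms with the $\mathcal T$-fixed part of $H^0(X,K_X)$ and gets real dimension $G=2g+b-1$. So your proof is precisely the $L=0$ case of that lemma.

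What your route buys is self-containedness within Riemann surface theory: it needs only $h^0(K_X)=\hat g$ on a closed surface, and no boundary-value Hodge theory on manifolds with boundary. It also treats this count uniformly with the meromorphic counts of Section~\ref{sec:ends}. What the paper's route buys is brevity, and a direct cohomological meaning for the number $2g+b-1$.

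Three small remarks:
\begin{itemize}
\item With the convention fixed by $\alpha\wedge\star\beta=\langle\alpha,\beta\rangle\,dA$, one has $\star dx=dy$. So the holomorphic form is $f\,dz=\omega+i\star\omega$, while $\omega-i\star\omega=\overline{f\,dz}$ is antiholomorphic.
\item The patching you flag is automatic. The star on one-forms is conformally invariant (Lemma~\ref{lem:conformal} with $k=1$), so $\omega+i\star\omega$ is intrinsically defined.
\item The hypothesis $b\ge1$ is what makes $X$ connected. You need connectedness to identify $h^0(K_X)$ with the genus computed from $\chi(X)=2\chi(\Sigmabar)$.
\end{itemize}
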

\begin{proof}
The Hodge theorem with absolute boundary conditions gives
\[
\HH^1_{\mathrm{abs}}(\Sigmabar,\gamma_0)
\cong H^1_{\mathrm{dR}}(\Sigmabar;\R);
\]
see \cite{Schwarz95}. Since $\Sigmabar$ is connected and has nonempty boundary, its zeroth Betti number is one and its second Betti number is zero. Hence
\[
2-2g-b=\chi(\Sigmabar)=1-\dim H^1_{\mathrm{dR}}(\Sigmabar;\R).
\]
Combine this identity with Proposition~\ref{prop:hodge-extension}.
\end{proof}

\begin{remark}
The isomorphism for interior punctures is the compact-boundary situation appearing in \cite{CMS26}. The additional issue here is extension at boundary punctures. The formula counts the topology of $\Sigmabar$, and it is independent of both puncture sets. When $b=0$, the corresponding closed-surface formula is $2g$; that case is outside Theorem~A. The Hodge star exchanges absolute and relative harmonic one-forms on a surface, so their $L^2$ dimensions agree.
\end{remark}
We will also use boundary uniqueness for harmonic one-forms.
\begin{lemma}\label{lem:boundary-uniqueness}
Let $\omega$ be a smooth closed and coclosed real one-form on a connected surface with boundary. If $\omega$ vanishes on a nonempty open boundary arc as a covector, then $\omega$ vanishes everywhere.
\end{lemma}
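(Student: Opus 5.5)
The plan is to reduce the statement to the identity theorem for holomorphic functions, using the local description of harmonic one-forms from the proof of Proposition~\ref{prop:hodge-extension}.

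\emph{Local reduction at the arc.} Fix a point $x_0$ in the interior of the given open boundary arc. Choose a boundary conformal coordinate $z=x+iy$ on a half-disk $D^+=\{|z|<r,\ \Ima z\ge0\}$ centered at $x_0$, with the boundary corresponding to a real interval $I$ contained in the arc. As in the proof of Proposition~\ref{prop:hodge-extension}, write $\omega=a\dd x+b\dd y$. Closedness and coclosedness are conformally invariant (Lemma~\ref{lem:conformal}), and in this coordinate they become the Cauchy--Riemann equations for $f=a-ib$. So $f$ is holomorphic in the interior of $D^+$, smooth up to $I$, and $\omega=\Rea(f\dd z)$.

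\emph{Reflection and identity theorem.} Vanishing of $\omega$ as a covector on the arc means $a(x,0)=b(x,0)=0$ on $I$, so $f$ vanishes on $I$. In particular $f$ is real on $I$, and Schwarz reflection extends $f$ holomorphically to the full disk $\{|z|<r\}$. The extension vanishes on a real segment with accumulation points, so by the identity theorem it vanishes identically. Hence $\omega\equiv0$ on $D^+$. This step is the one that needs care. One must justify that Schwarz reflection applies to a function that is only continuous up to $I$ and real there; the classical reflection principle for continuous boundary values gives exactly this. One could also avoid reflection: the boundary values of $f$ are smooth and zero, the extension by zero across $I$ is $C^1$ and satisfies Cauchy--Riemann weakly, hence is holomorphic.

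\emph{Propagation.} Let $U$ be the interior of the zero set of $\omega$ in $\operatorname{int}\Sigma$. It is open, and it is nonempty because it contains $\operatorname{int}D^+$. To see that $U$ is closed in $\operatorname{int}\Sigma$, take a limit point $p$ and a conformal disk $D$ around $p$. On $D$ we have $\omega=\Rea(f\dd z)$ with $f$ holomorphic, and $D$ meets the open set $U$, so $f$ vanishes on an open subset of the connected disk $D$. Hence $f\equiv0$ on $D$, and $p\in U$. The interior of a connected surface with boundary is connected, so $U=\operatorname{int}\Sigma$. By continuity, $\omega$ then vanishes on $\partial\Sigma$ as well, so $\omega$ vanishes everywhere.
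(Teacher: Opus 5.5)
Your proposal is correct and follows the paper's proof. You write $\omega=\Rea(f\,dz)$ in a boundary conformal chart, observe that $f=0$ on the arc, reflect across the real axis, apply the identity theorem, and then propagate the vanishing over the surface. Your open--closed argument on $\operatorname{int}\Sigma$ is simply a detailed version of the paper's one-line appeal to the identity theorem in overlapping conformal charts.
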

\begin{proof}
In a boundary conformal chart, $\omega=\Rea(f\dd z)$ with $f$ holomorphic. Vanishing of the covector implies $f=0$ on the arc. Reflect $f$ across the real axis, where its boundary values are real. The reflected holomorphic function vanishes on an interval in its interior, and hence vanishes identically near that interval. The identity theorem, applied in overlapping conformal charts, propagates the vanishing over the connected surface.
\end{proof}

\section{A localized identity for the Jacobi quadratic form}\label{sec:energy}
For any smooth closed and coclosed real one-form $\omega$ satisfying the absolute boundary condition, set
\[
X=\omega^\sharp,\qquad u_i=\langle X,e_i\rangle,\quad i=1,2,3,
\]
where $e_1,e_2,e_3$ is the standard Euclidean basis. In particular,
$\sum_i u_i^2=|X|^2=|\omega|^2$.

\begin{lemma}\label{lem:boundary-bochner}
Along $\partial\Sigma$,
\[
\frac12\partial_\eta|X|^2=-k|X|^2.
\]
\end{lemma}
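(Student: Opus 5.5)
We prove the identity pointwise along $\partial\Sigma$, using only that $\omega$ is closed, coclosed and absolute. First we show that $X$ is tangent to $\partial\Sigma$ there, and then we differentiate $|X|^2$ in the conormal direction using the symmetry of $\nabla X$.

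\emph{Tangency and the conormal derivative.} Fix a boundary point with frame $T,\eta$. The absolute condition $\omega(\eta)=0$ says $\langle X,\eta\rangle=0$. Hence $X=\phi T$ along $\partial\Sigma$, where $\phi=\omega(T)$, and
\[
\tfrac12\partial_\eta|X|^2=\langle\nabla_\eta X,X\rangle=\phi\langle\nabla_\eta X,T\rangle .
\]

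\emph{Using closedness and coclosedness.} Since $d\omega=0$, the endomorphism $\nabla X$ is symmetric, so $\langle\nabla_\eta X,T\rangle=\langle\nabla_T X,\eta\rangle$. Differentiating $\langle X,\eta\rangle=0$ along the boundary in the direction $T$ gives
\[
\langle\nabla_TX,\eta\rangle=-\langle X,\nabla_T\eta\rangle=-\phi\langle T,\nabla_T\eta\rangle=-\phi k,
\]
using the definition of $k$ in \eqref{eq:boundary-notation}. Combining the two displays yields $\tfrac12\partial_\eta|X|^2=-k\phi^2=-k|X|^2$.

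Coclosedness is not actually needed for this identity; only closedness (symmetry of $\nabla X$) and tangency are used. The one point needing care is the sign convention. We have $k=\langle\nabla_T\eta,T\rangle$ intrinsically, which is the geodesic curvature of $\partial\Sigma$ in $\Sigma$ with respect to the outward conormal. This equals $h^{\partial\Omega}(T,T)$ because $dF(\eta)=\nu$ along $\partial\Sigma$. The paper fixes this identification in \eqref{eq:boundary-notation}, so no further argument is required.
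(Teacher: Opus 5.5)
Your proof is correct and follows the paper's argument step for step. You use the absolute condition to write $X=\phi T$, then use closedness to trade $\langle\nabla_\eta X,T\rangle$ for $\langle\nabla_T X,\eta\rangle$, and finally differentiate $\langle X,\eta\rangle=0$ along $T$ to obtain $-k\phi$; as you note, coclosedness is not used, and the paper's proof likewise invokes only closedness.
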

\begin{proof}
Since $\omega(\eta)=0$, write $X=fT$ on the boundary. Closedness of $\omega$ implies symmetry of $\nabla X$. Therefore,
\[
\langle\nabla_\eta X,T\rangle
=\langle\nabla_T X,\eta\rangle
=-\langle X,\nabla_T\eta\rangle=-kf.
\]
Taking the inner product of $\nabla_\eta X$ with $X=fT$ proves the identity, including at zeros of $X$.
\end{proof}

The following identity requires no global integrability assumption on $\omega$.

\begin{proposition}\label{prop:energy}
For every compactly supported Lipschitz function $\varphi$ on $\Sigma$,
\begin{equation}\label{eq:localized}
\sum_{i=1}^3Q(\varphi u_i,\varphi u_i)
=\int_\Sigma|\nabla\varphi|^2|\omega|^2\dd A
-\int_{\partial\Sigma}\mathscr H\varphi^2|\omega|^2\dd s.
\end{equation}
\end{proposition}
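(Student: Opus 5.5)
We follow the standard route for such localized identities: first the pointwise identities for the coordinate functions $u_i$, then an integration by parts with the cutoff $\varphi$ kept inside. Since $\varphi$ has compact support in $\Sigma$ (which does not contain the punctures), all integrals are over a compact region and no integrability of $\omega$ is needed. We use the elementary identity, valid for any smooth $u$ and Lipschitz compactly supported $\varphi$,
\[
Q(\varphi u,\varphi u)=\int_\Sigma u^2|\nabla\varphi|^2\dd A-\int_\Sigma\varphi^2 uJu\dd A+\int_{\partial\Sigma}\varphi^2 uBu\dd s .
\]
It follows by expanding $|\nabla(\varphi u)|^2=u^2|\nabla\varphi|^2+\varphi^2|\nabla u|^2+\tfrac12\langle\nabla\varphi^2,\nabla u^2\rangle$ and integrating $\diver(\varphi^2 u\nabla u)$ by parts. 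It then suffices to compute $\sum_i u_iJu_i$ and $\sum_i u_iBu_i$.

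\emph{Interior term.} Writing $X=\sum_i u_ie_i^{\top}$, we have $u_i=\langle X,e_i\rangle=\langle X,e_i^\top\rangle$. Since $X$ is harmonic (closed and coclosed), the Bochner formula on a surface gives $\Delta X=KX$ for the rough Laplacian. We differentiate twice, using $D_V e_i^\top=\langle e_i,N\rangle A(V)$, minimality ($\tr A=0$), Codazzi, and the symmetry and trace-freeness of $\nabla X$. This yields the classical Ros-type identity
\[
\sum_i u_i\Delta u_i=\langle\Delta X,X\rangle-|A(X)|^2=K|X|^2-|A(X)|^2.
\]
In dimension two with $A$ traceless, $|A(X)|^2=\tfrac12|A|^2|X|^2$ and $K=-\tfrac12|A|^2$. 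Hence $\sum_iu_iJu_i=-|A|^2|X|^2+|A|^2|X|^2=0$, after checking that the cross term $\sum_i|\nabla u_i|^2$ cancels consistently. More carefully, we should compute $\sum_i u_i\Delta u_i=\tfrac12\Delta|X|^2-\sum_i|\nabla u_i|^2$, with $\sum_i|\nabla u_i|^2=|\nabla X|^2+|A(X)|^2$, and combine this with $\tfrac12\Delta|X|^2=|\nabla X|^2+K|X|^2$. The net result is $\sum_i u_iJu_i=0$ pointwise, so the interior contribution is exactly $\int|\nabla\varphi|^2|\omega|^2$.

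\emph{Boundary term.} We compute $\sum_iu_i\partial_\eta u_i=\langle D_\eta X,X\rangle$, where $D$ is the Euclidean derivative along $F$. Its tangential part is $\langle\nabla_\eta X,X\rangle=\tfrac12\partial_\eta|X|^2=-k|X|^2$ by Lemma~\ref{lem:boundary-bochner}, and its normal part is orthogonal to $X$ and so contributes nothing. Therefore
\[
\sum_iu_iBu_i=-k|X|^2-q|X|^2=-\mathscr H|\omega|^2,
\]
using \eqref{eq:boundary-notation}. Substituting both computations into the identity above gives \eqref{eq:localized}.

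\emph{Expected obstacle.} The main difficulty is the interior pointwise identity $\sum_i u_iJu_i=0$. It requires careful bookkeeping of $\sum_i|\nabla u_i|^2$: since $\nabla u_i(V)=\langle\nabla_VX,e_i\rangle+\langle A(V),X\rangle\langle N,e_i\rangle$, we get $\sum_i|\nabla u_i|^2=|\nabla X|^2+|A(X)|^2$. This must then be combined with the surface Bochner formula and the Gauss equation. Lipschitz regularity of $\varphi$ is harmless, since all integrations by parts hold for $W^{1,\infty}$ functions with compact support.
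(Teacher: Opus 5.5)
Your proposal is correct and is essentially the paper's argument, just in a different order. Both rest on the same ingredients: $\sum_i|\nabla u_i|^2=|\nabla X|^2+|AX|^2$ from the Gauss formula, the Bochner formula, $A^2=\tfrac12|A|^2\operatorname{Id}$ with $K=-\tfrac12|A|^2$, and Lemma~\ref{lem:boundary-bochner} at the boundary. You integrate by parts componentwise to obtain $\sum_iu_iJu_i=0$ and $\sum_iu_iBu_i=-\mathscr H|\omega|^2$, while the paper sums first and integrates $\tfrac12\varphi^2\Delta|X|^2$ by parts; the "more careful" interior computation you give is the one to keep, and the preliminary Codazzi sketch can be dropped.
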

\begin{proof}
We first take $\varphi$ smooth. For a local orthonormal tangent frame $E_1,E_2$, the Gauss formula gives
\[
\sum_{i=1}^3|\nabla u_i|^2
=\sum_{\alpha=1}^2|D_{E_\alpha}X|^2
=|\nabla X|^2+|AX|^2.
\]
Minimality in dimension two implies
$A^2=(|A|^2/2)\operatorname{Id}$ and $K=-|A|^2/2$. Hence
\[
\sum_i\bigl(|\nabla u_i|^2-|A|^2u_i^2\bigr)
=|\nabla X|^2-\frac{|A|^2}{2}|X|^2
=\frac12\Delta|X|^2,
\]
where the last equality is the Bochner formula for a harmonic one-form. Expanding the gradients of $\varphi u_i$ now yields
\begin{align*}
\sum_iQ(\varphi u_i,\varphi u_i)
={}&\int_\Sigma|\nabla\varphi|^2|X|^2\dd A
+\int_\Sigma\varphi\langle\nabla\varphi,\nabla|X|^2\rangle\dd A\\
&+\frac12\int_\Sigma\varphi^2\Delta|X|^2\dd A
-\int_{\partial\Sigma}q\varphi^2|X|^2\dd s.
\end{align*}
Integration by parts in the third term cancels the second term and leaves
\[
\int_\Sigma|\nabla\varphi|^2|X|^2\dd A
+\int_{\partial\Sigma}\varphi^2
\left(\frac12\partial_\eta|X|^2-q|X|^2\right)\dd s.
\]
Lemma~\ref{lem:boundary-bochner} and \eqref{eq:boundary-notation} give \eqref{eq:localized}. Local $H^1$ approximation and the trace theorem extend the identity to compactly supported Lipschitz cutoffs.
\end{proof}

Fix $p\in\Sigma$. Completeness and local compactness make the intrinsic length metric proper, so its closed bounded balls are compact. For $R>0$, use
\[
\varphi_R(x)=\chi\bigl(d_\Sigma(p,x)/R\bigr),
\]
where $\chi$ is Lipschitz, equals one on $[0,1]$, vanishes on $[2,\infty)$, and takes values in $[0,1]$. Then
\begin{equation}\label{eq:cutoffs}
\supp\varphi_R\subset\overline B_{2R},\qquad
\varphi_R=1\text{ on }B_R,\qquad
|\nabla\varphi_R|\le C/R\text{ a.e.}
\end{equation}
For $\omega\in\HH^1_{(2),\mathrm{abs}}(\Sigma)$, the harmonic-form error satisfies
\begin{equation}\label{eq:cutoff-error}
\int_\Sigma|\nabla\varphi_R|^2|\omega|^2\dd A
\le\frac{C^2}{R^2}\|\omega\|_{L^2}^2\longrightarrow0.
\end{equation}
No condition on $\partial_\eta\varphi_R$ is imposed. The tests are admissible for the quadratic form without satisfying Robin conditions individually; those conditions arise later from a weak equation. Nor do we assume that $\mathscr H|\omega|^2$ is integrable on the entire boundary. Every boundary integral in \eqref{eq:localized} has compact support, and its sign will suffice.

\section{Exceptional forms and a boundary-end obstruction}\label{sec:exceptional}
Define
\begin{equation}\label{eq:exceptional-space}
\KK=\left\{\omega\in\HH^1_{(2),\mathrm{abs}}(\Sigma):
Ju_i=0\text{ in }\Sigma,\quad Bu_i=0\text{ on }\partial\Sigma,
\quad i=1,2,3\right\}.
\end{equation}
Only this space, rather than the full Jacobi kernel, is relevant to the equality case of the harmonic-form construction.

\begin{lemma}\label{lem:constant}
If $\omega\in\KK$, then for every connected boundary component $\Gamma$ there is a real constant $c_\Gamma$ such that
\begin{equation}\label{eq:constant-component}
X=c_\Gamma T\quad\text{on }\Gamma.
\end{equation}
In particular, if $\partial\Sigma\ne\varnothing$, then $\dim\KK\le1$.
\end{lemma}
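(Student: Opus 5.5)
The plan is to read the Robin conditions $Bu_i=0$ as a single vector identity along $\partial\Sigma$, combine it with the coclosed condition, and then use Lemma~\ref{lem:boundary-uniqueness} to get the dimension bound. The interior equations $Ju_i=0$ should not be needed; only the boundary conditions and the harmonicity of $\omega$ enter.

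\emph{Step 1: the vector Robin identity.} Let $D$ denote the Euclidean connection and regard $X$ as an $\R^3$-valued function on $\Sigma$. Since $u_i=\langle X,e_i\rangle$ and the $e_i$ are parallel, we have $\partial_\eta u_i=\langle D_\eta X,e_i\rangle$. The three equations $Bu_i=0$ are therefore equivalent to
\[
D_\eta X=qX\quad\text{along }\partial\Sigma.
\]
By the Gauss formula, $D_\eta X=\nabla_\eta X+\langle A\eta,X\rangle N$. Since $qX$ is tangent to $\Sigma$, comparing tangential parts gives $\nabla_\eta X=qX$. (The normal part yields $\langle A\eta,X\rangle=0$, which we will not need.)

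\emph{Step 2: constancy along $\Gamma$.} Write $X=fT$ on $\Gamma$, which is possible because $\omega(\eta)=0$. From Step 1,
\[
\langle\nabla_\eta X,\eta\rangle=q\langle X,\eta\rangle=0 .
\]
Since $\delta_\gamma\omega=0$, we have $\diver X=0$. In the orthonormal frame $\{\eta,T\}$ at boundary points this gives
\[
0=\langle\nabla_\eta X,\eta\rangle+\langle\nabla_T X,T\rangle=\langle\nabla_T X,T\rangle .
\]
Because $|T|=1$ implies $\langle\nabla_T T,T\rangle=0$, we get $\langle\nabla_T(fT),T\rangle=T(f)$. Hence $T(f)=0$ along the connected curve $\Gamma$, so $f\equiv c_\Gamma$ is constant, which is \eqref{eq:constant-component}. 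As a consistency check, pairing Step 1 with $T$ and using Lemma~\ref{lem:boundary-bochner} gives $-kf=qf$, that is, $\mathscr H c_\Gamma=0$. We do not need this here, but it fits the later use of mean-convexity.

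\emph{Step 3: the dimension bound.} Fix one component $\Gamma$ of $\partial\Sigma$. The map $\KK\to\R$, $\omega\mapsto c_\Gamma$, is linear. If $c_\Gamma=0$, then $X=0$ on $\Gamma$. Since $\omega(\eta)=0$ there as well, the covector $\omega$ vanishes on the nonempty open arc $\Gamma$. Lemma~\ref{lem:boundary-uniqueness} then forces $\omega\equiv0$ on the connected surface $\Sigma$. The map is therefore injective, and $\dim\KK\le1$.

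The main subtlety is Step 1: one must treat the three scalar Robin conditions as one Euclidean vector equation and then project to the tangent plane. Once that is done, the coclosed condition turns directly into the derivative of $f$ along $\Gamma$, with no flatness assumption on $\Sigma$.
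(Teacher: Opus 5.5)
Your proof is correct and follows the paper's argument essentially step for step. Like the paper, you combine the three Robin conditions into $D_\eta X=qX$, project tangentially to get $\langle\nabla_\eta X,\eta\rangle=0$, use $\diver X=0$ to obtain $T(f)=0$, and conclude injectivity of $\omega\mapsto c_\Gamma$ via Lemma~\ref{lem:boundary-uniqueness}; your side observation that $\mathscr H c_\Gamma=0$ is also correct, though not needed.
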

\begin{proof}
Since the Euclidean frame is parallel, the three scalar Robin equations
combine into
\(
D_\eta X=qX.
\)
The absolute boundary condition gives $X=fT$ on $\Gamma$. Taking the
inner product with $\eta$ and using the Gauss formula, we obtain
$\langle\nabla_\eta X,\eta\rangle=0$. Moreover, $\delta\omega=0$
implies $\diver X=0$, and hence
\[
0=\langle\nabla_TX,T\rangle+\langle\nabla_\eta X,\eta\rangle
=T(f),
\]
where we used $|T|=1$. Since $\Gamma$ is connected, $f=c_\Gamma$
is constant. For a fixed $\Gamma$, the linear map
$\omega\mapsto c_\Gamma$ is injective: if $c_\Gamma=0$, then
$X=0$ on $\Gamma$, so $\omega$ vanishes there as a covector.
Lemma~\ref{lem:boundary-uniqueness} yields $\omega\equiv0$.
Thus $\dim\mathcal K\le1$.
\end{proof}

\begin{remark}
The proof of Lemma~\ref{lem:constant} also applies to forms that are not globally $L^2$. It is the intrinsic version of the boundary constancy argument used in \cite{Mendes26}. Its proof does not require $\Sigma$ to be flat. It only uses the tangency and divergence equations and the common scalar Robin coefficient of the three Euclidean components.
\end{remark}

\begin{lemma}\label{lem:finite-integral}
Let $\omega\in\HH^1_{(2),\mathrm{abs}}(\Sigma)$, and let $\alpha:(0,\varepsilon]\to\partial\Sigma$ be a boundary arc that, in a boundary coordinate of $\Sigmabar$, approaches a point of $P_{\partial}$ as $t\downarrow0$. Then
\[
\int_0^\varepsilon|\omega(\alpha'(t))|\dd t<\infty.
\]
If the induced metric on $\Sigma$ is complete, the same arc has infinite induced length towards that puncture.
\end{lemma}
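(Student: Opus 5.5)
The plan has two parts. First, $\omega$ extends smoothly to $\Sigmabar$, so its integral along the arc is finite. Second, completeness forces the escaping boundary arc to have infinite length.

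\emph{Finite integral.} By Proposition~\ref{prop:hodge-extension}, $\omega$ is the restriction of a smooth absolute harmonic form $\overline\omega$ on $\Sigmabar$. Work in the boundary conformal coordinate $z$ on the closed upper half-disk centered at the puncture $q\in P_\partial$. There $\overline\omega=\Rea(\widetilde f\dd z)$ with $\widetilde f$ holomorphic across $0$ and real on the real axis. Along the real axis this gives $\overline\omega=\widetilde f(x)\dd x$ with $\widetilde f$ bounded near $0$.

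The arc $\alpha$ lies in $\partial\Sigma$, so in this chart it lies on the real axis. I will reparametrize it monotonically as $x\in(0,x_0]$, or on the negative side, which is symmetric. A general arc that backtracks only increases the integrand in a controlled way if it is of bounded variation. I will take the natural reading that the arc is a parametrization of a boundary segment ending at the puncture, so reparametrization invariance of $\int|\omega(\alpha')|\dd t$ under monotone changes reduces matters to $\int_0^{x_0}|\widetilde f(x)|\dd x\le x_0\sup|\widetilde f|<\infty$.

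\emph{Infinite length.} Suppose the induced length $\int_0^\varepsilon|\alpha'(t)|_\gamma\dd t$ were finite. Then $\alpha(t)$ would be Cauchy in the intrinsic distance $d_\Sigma$, since $d_\Sigma(\alpha(s),\alpha(t))$ is bounded by the length of the subarc. Completeness of $\Sigma$, including $\partial\Sigma$, gives a limit $p\in\Sigma$. But $\alpha(t)\to q$ in $\Sigmabar$, and the topology of $\Sigma$ is the subspace topology from $\Sigmabar$, because $\Sigma$ is identified with an open subset of the compact surface via the conformal equivalence. So $p=q\notin\Sigma$, a contradiction.

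The main subtlety is to state clearly that metric convergence in $(\Sigma,d_\Sigma)$ implies convergence in the manifold topology. This holds because the length metric of a smooth Riemannian metric induces the manifold topology. Limits in $\Sigmabar$ are unique, hence $p=q$, which is impossible.
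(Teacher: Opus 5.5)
Your proposal is correct and follows the paper's proof essentially step by step. For the finite integral, you use the smooth extension from Proposition~\ref{prop:hodge-extension} and parametrize by the boundary coordinate of $\Sigmabar$; the paper adopts the same monotone-parametrization reading implicitly. For the infinite length, you use the Cauchy-sequence argument with completeness, the agreement of the metric and manifold topologies, and uniqueness of limits in $\Sigmabar$.
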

\begin{proof}
By Proposition~\ref{prop:hodge-extension}, $\omega$ extends smoothly across the puncture. Parametrize the arc by a smooth boundary coordinate $t$ of the compactification. Its pullback is $a(t)\dd t$, where $a$ extends smoothly to $t=0$, and hence is integrable.

If the induced length towards the puncture were finite, then the length of each sufficiently small tail would tend to zero. Therefore $\alpha(t)$ would be Cauchy as $t\downarrow0$. Completeness would give a limit in $\Sigma$. The metric topology agrees with the manifold topology, and the inclusion into $\Sigmabar$ is continuous, so that limit would also be the puncture. This is impossible because the puncture does not belong to $\Sigma$.
\end{proof}

Next, we establish a key result for surfaces with noncompact boundary.

\begin{proposition}\label{prop:vanishing}
Suppose that $\Sigma$ is complete, has the finite conformal compactification of Proposition~\ref{prop:compactification}, and has noncompact boundary. Then $\KK=\{0\}$.
\end{proposition}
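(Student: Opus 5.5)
Let $\omega\in\KK$. The plan is to combine Lemma~\ref{lem:constant} with Lemma~\ref{lem:finite-integral} along one boundary arc running into a boundary puncture.

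\emph{Choice of arc.} By Proposition~\ref{prop:compactification}, $P_\partial\ne\varnothing$. Pick $q\in P_\partial$ and a boundary conformal coordinate of $\Sigmabar$ centered at $q$. Since $P_\partial$ is finite, a small real interval $(0,\varepsilon]$ on one side of $q$ lies in $\partial\Sigma$. Let $\alpha(t)$, $t\in(0,\varepsilon]$, be this arc. It lies in a single connected component $\Gamma$ of $\partial\Sigma$.

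\emph{Constant tangential component.} By Lemma~\ref{lem:constant}, $X=c_\Gamma T$ on $\Gamma$. Orient $T$ so that $\alpha'(t)=|\alpha'(t)|_\gamma T$. Then
\[
\omega(\alpha'(t))=\langle X,\alpha'(t)\rangle_\gamma=c_\Gamma|\alpha'(t)|_\gamma .
\]
If the other orientation is chosen, only the sign changes. Hence
\[
\int_0^\varepsilon|\omega(\alpha'(t))|\dd t=|c_\Gamma|\,\mathrm{length}_\gamma(\alpha).
\]

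\emph{Contradiction unless $c_\Gamma=0$.} Lemma~\ref{lem:finite-integral} says the left-hand side is finite, because $\omega$ extends smoothly across $q$. The same lemma, using completeness, says the induced length of $\alpha$ toward $q$ is infinite. Therefore $c_\Gamma=0$. Then $X$ vanishes on the whole of $\Gamma$, which contains an open boundary arc, so $\omega$ vanishes there as a covector. Lemma~\ref{lem:boundary-uniqueness} gives $\omega\equiv0$, using that $\Sigma$ is connected. Thus $\KK=\{0\}$.

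\emph{Main obstacle.} The delicate point is the bookkeeping that connects the two lemmas. We must confirm that the arc given by the compactification coordinate is a genuine piece of the geometric boundary $\partial\Sigma$, contained in one component $\Gamma$, so that the constant $c_\Gamma$ applies along all of it. We must also check that the pairing $\omega(\alpha')$ in coordinate parametrization equals $c_\Gamma$ times the $\gamma$-speed. This follows from $X=\omega^\sharp$ with respect to $\gamma$ and from $T$ being $\gamma$-unit. Both are routine, but they are exactly where the noncompactness of the boundary, through $P_\partial\neq\varnothing$, enters the argument.
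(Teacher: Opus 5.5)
Your proposal is correct and follows the paper's proof essentially step for step. Along an arc running into a boundary puncture, Lemma~\ref{lem:constant} gives $|\omega(\alpha')|=|c_\Gamma|\,|\alpha'|_\gamma$, and Lemma~\ref{lem:finite-integral} makes the left side integrable while the $\gamma$-length is infinite, which forces $c_\Gamma=0$; Lemma~\ref{lem:boundary-uniqueness} then gives $\omega\equiv0$.
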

\begin{proof}
Choose a boundary puncture and a neighboring boundary arc $\alpha$ as in Lemma~\ref{lem:finite-integral}. For $\omega\in\KK$, let $\Gamma$ be the boundary component containing this arc. By Lemma~\ref{lem:constant}, $X=c_\Gamma T$ along $\Gamma$. Therefore, independently of the parametrization,
\[
|\omega(\alpha'(t))|=|c_\Gamma|\,|\alpha'(t)|_\gamma.
\]
The integral of the left-hand side is finite, while the integral of $|\alpha'|_\gamma$ is infinite by Lemma~\ref{lem:finite-integral}. Thus $c_\Gamma=0$, and boundary uniqueness gives $\omega=0$.
\end{proof}

This argument uses a finite absolute integral of a one-form along a compactified boundary arc. It does not infer an $L^2$ trace estimate in the induced metric from interior $L^2$ integrability. The smooth extension at the boundary puncture is what justifies the finite integral.

\section{The index estimate and the role of nullity}\label{sec:index}
We now obtain the Jacobi equations from the vanishing of localized energies. The use of a fixed compactly supported negative space avoids choosing global negative eigenfunctions or imposing global energy assumptions on the uncut coordinate functions.

\begin{lemma}\label{lem:negative-space}
Assume $I=\Ind_s(\Sigma)<\infty$. There is a subspace
$\EE=\spanop\{\psi_1,\ldots,\psi_I\}\subset C_c^\infty(\Sigma)$
of dimension $I$ on which $Q$ is negative definite. On
\[
\EE^{\perp_Q}=
\{v\in H_c^1(\Sigma):Q(v,\psi_j)=0\text{ for all }j\},
\]
$Q$ is nonnegative and satisfies
\[
|Q(v,w)|^2\le Q(v,v)Q(w,w).
\]
Every compactly supported smooth $v$ has a decomposition
$v=v^\perp+e$ with $e\in\EE$ and $v^\perp\in\EE^{\perp_Q}\cap C_c^\infty(\Sigma)$.
\end{lemma}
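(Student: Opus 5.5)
The plan is to build $\EE$ directly from the definition of the strong index and then obtain the remaining assertions from finite-dimensional linear algebra. Since $I=\Ind_s(\Sigma)$ is a finite supremum of dimensions, it is attained. So there is an $I$-dimensional subspace $\EE\subset C_c^\infty(\Sigma)$ on which $Q$ is negative definite; we fix a basis $\psi_1,\ldots,\psi_I$. All pairings $Q(v,\psi_j)$ with $v\in H^1_c(\Sigma)$ are well defined, because the $\psi_j$ have compact support and the trace theorem applies on a fixed compact region.

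\emph{Decomposition.} Given $v\in H^1_c(\Sigma)$, we look for $e=\sum_j a_j\psi_j$ with $Q(v-e,\psi_k)=0$ for all $k$. This is the linear system $\sum_j a_jQ(\psi_j,\psi_k)=Q(v,\psi_k)$. Its Gram matrix $(Q(\psi_j,\psi_k))$ is negative definite, hence invertible, so the system has a unique solution. Set $v^\perp=v-e$. If $v$ is smooth with compact support, then so is $v^\perp$, since $e\in\EE\subset C_c^\infty(\Sigma)$.

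\emph{Nonnegativity.} Suppose $v\in\EE^{\perp_Q}$ satisfies $Q(v,v)<0$. Then $v\notin\EE$, because $Q(v,v)<0$ and $Q(v,\psi_j)=0$ for all $j$ would give $Q(v,v)=0$ if $v\in\EE$. Consider $\EE'=\EE\oplus\spanop\{v\}$. For $e\in\EE$ and $t\in\R$,
\[
Q(e+tv,e+tv)=Q(e,e)+t^2Q(v,v),
\]
since the cross terms vanish. This is negative unless $e=0$ and $t=0$, so $Q$ is negative definite on the $(I+1)$-dimensional space $\EE'\subset H^1_c(\Sigma)$. The paper has already noted that the strong index can be computed with compactly supported $H^1$ functions: a basis of a negative space on a fixed compact set can be approximated by smooth functions while keeping negative definiteness. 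This contradicts the maximality of $I$. Hence $Q\ge0$ on $\EE^{\perp_Q}$.

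\emph{Cauchy--Schwarz.} $\EE^{\perp_Q}$ is a linear subspace on which $Q$ is a nonnegative symmetric bilinear form. The standard argument applies: $Q(v+tw,v+tw)\ge0$ for all real $t$, and the discriminant condition gives $|Q(v,w)|^2\le Q(v,v)Q(w,w)$.

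The main obstacle is the maximality step when $v$ is merely $H^1_c$. It needs the approximation remark from Section~\ref{sec:preliminaries}, applied on a compact set containing the supports of $v$ and all $\psi_j$; negative definiteness is an open condition on a finite basis under $H^1$-continuity of $Q$. Everything else is linear algebra.
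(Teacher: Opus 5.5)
Your proposal is correct and follows essentially the same route as the paper: you attain the supremum to get $\EE$, rule out $Q(v,v)<0$ on $\EE^{\perp_Q}$ by enlarging to $\EE\oplus\R v$ and invoking the $H^1_c$ characterization of the index, deduce Cauchy--Schwarz from nonnegativity, and solve the invertible Gram system to obtain the decomposition. Your explicit check that $v\notin\EE$, which ensures the enlarged space has dimension $I+1$, is a small detail the paper leaves implicit.
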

\begin{proof}
Since the index is the finite supremum of integer dimensions, there
exists a negative definite subspace $\EE$ of dimension $I$; if $I=0$,
take $\EE=\{0\}$. If $v\in\EE^{\perp_Q}$ satisfied $Q(v,v)<0$, then
\[
Q(e+tv,e+tv)=Q(e,e)+t^2Q(v,v),
\qquad \text{for}\, e\in\EE,\ t\in\R
\]
would make $\EE\oplus\R v$ negative definite, contradicting the
$H_c^1$ characterization of the index. Thus $Q$ is nonnegative on
$\EE^{\perp_Q}$, and applying this to $v+tw$ gives Cauchy--Schwarz.

Finally, the matrix $(Q(\psi_i,\psi_j))$ is negative definite and
therefore invertible. For any $v\in C_c^\infty(\Sigma)$, the system
\[
\sum_{i=1}^I a_iQ(\psi_i,\psi_j)=Q(v,\psi_j),
\qquad j=1,\ldots,I,
\]
has a unique solution. Setting $e=\sum_i a_i\psi_i$ gives the desired
decomposition $v=(v-e)+e$, with $v-e\in\EE^{\perp_Q}$.
\end{proof}

The following result bounds the dimension of the space of harmonic one-forms in terms of the Morse index and the dimension of the exceptional space.

\begin{proposition}\label{prop:index-reduction}
Let $\Sigma$ be a complete free boundary minimal surface with finite strong index $I$ and $\mathscr H\ge0$. For the space $\KK$ in \eqref{eq:exceptional-space},
\begin{equation}\label{eq:dimension-reduction}
\dim\HH^1_{(2),\mathrm{abs}}(\Sigma)\le3I+\dim\KK,
\end{equation}
whenever the right-hand side is finite.
\end{proposition}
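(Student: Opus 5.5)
We argue by contradiction via linear algebra on cut-off coordinate tests. Write $\HH=\HH^1_{(2),\mathrm{abs}}(\Sigma)$ and suppose $\dim\HH>3I+\dim\KK$. Choose a finite-dimensional subspace $W\subset\HH$ with $W\cap\KK=\{0\}$ and $\dim W=3I+1$; such a $W$ exists by choosing a complement of $\KK$ in a subspace of $\HH$ of dimension $3I+1+\dim\KK$. Fix the negative space $\EE=\spanop\{\psi_1,\dots,\psi_I\}$ of Lemma~\ref{lem:negative-space}, and let $K_0$ be a compact set containing all the supports $\supp\psi_j$. With $\varphi_R$ from \eqref{eq:cutoffs}, take $R$ so large that $\varphi_R\equiv1$ on a neighborhood of $K_0$. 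The $3I$ linear conditions
\[
Q(\varphi_R u_i(\omega),\psi_j)=0,\qquad i=1,2,3,\ j=1,\dots,I,
\]
on $\omega\in W$ have a nonzero solution $\omega_R$, which we normalize by $\|\omega_R\|_{L^2}=1$. Because $\varphi_R=1$ near $\supp\psi_j$, these conditions read $Q(u_i(\omega_R),\psi_j)=0$, a pairing that is well defined since $\psi_j$ has compact support. In particular they do not depend on $R$ once $R$ is large.

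For a fixed large $R_0$, the solution set $W_0=\{\omega\in W: Q(u_i(\omega),\psi_j)=0\ \forall i,j\}$ is therefore a nonzero subspace independent of $R\ge R_0$. For every $\omega\in W_0$ and every $R\ge R_0$ we have $\varphi_Ru_i\in\EE^{\perp_Q}$, so $Q(\varphi_Ru_i,\varphi_Ru_i)\ge0$ for each $i$. Proposition~\ref{prop:energy} together with $\mathscr H\ge0$ and \eqref{eq:cutoff-error} then gives
\[
0\le\sum_iQ(\varphi_Ru_i,\varphi_Ru_i)\le\frac{C^2}{R^2}\|\omega\|^2_{L^2}\to0 .
\]

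Fix $\omega\in W_0\setminus\{0\}$; we show that $\omega\in\KK$. Take any $v\in C_c^\infty(\Sigma)$ and decompose $v=v^\perp+e$ as in Lemma~\ref{lem:negative-space}. Since $\varphi_R=1$ on $\supp v$ for large $R$, we have $Q(u_i,v)=Q(\varphi_Ru_i,v^\perp)+Q(\varphi_Ru_i,e)$. The second term vanishes because $\omega\in W_0$. By the Cauchy--Schwarz inequality on $\EE^{\perp_Q}$, the first term is bounded by $Q(\varphi_Ru_i,\varphi_Ru_i)^{1/2}Q(v^\perp,v^\perp)^{1/2}\to0$. Hence $Q(u_i,v)=0$ for all compactly supported smooth $v$. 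Choosing $v$ supported in the interior gives $Ju_i=0$ weakly, hence classically. Since the $u_i$ are smooth up to $\partial\Sigma$, integration by parts with arbitrary boundary traces of $v$ then gives $Bu_i=0$. Thus $\omega\in\KK\cap W=\{0\}$, a contradiction.

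The main obstacle is the independence of the constraints from $R$, which lets one fixed $\omega$ work for all $R$ rather than producing an $R$-dependent family. This is secured by taking $\supp\psi_j$ compact and $\varphi_R\equiv1$ there. Without it, one would instead extract a convergent subsequence of the $\omega_R$ in the finite-dimensional $W$ and pass to the limit. The only remaining care is that the boundary sign term has compact support in \eqref{eq:localized}, so dropping it requires only $\mathscr H\ge0$.
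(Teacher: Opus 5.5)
Your proposal is correct and is essentially the paper's proof. The paper defines $\mathcal F(\omega)=\bigl(Q(u_i,\psi_j)\bigr)_{i,j}$ and proves $\ker\mathcal F\subset\KK$ by the same cutoff, localized-identity and Cauchy--Schwarz argument you give, then concludes by rank--nullity on finite-dimensional subspaces, which is equivalent to your contradiction argument using a complement $W$ of $\KK$.
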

\begin{proof}
Choose $\EE$ as in Lemma~\ref{lem:negative-space}. Define
\[
\mathcal F:\HH^1_{(2),\mathrm{abs}}(\Sigma)\longrightarrow\R^{3I},
\qquad
\mathcal F(\omega)=\bigl(Q(u_i,\psi_j)\bigr)_{i,j}.
\]
These pairings are well defined because the $\psi_j$ have compact support. We claim that $\ker\mathcal F\subset\KK$.

Fix $\omega\in\ker\mathcal F$. For all sufficiently large $R$, the cutoff $\varphi_R$ from \eqref{eq:cutoffs} is one on a neighborhood of $\bigcup_j\supp\psi_j$. Thus
\[
Q(\varphi_Ru_i,\psi_j)=Q(u_i,\psi_j)=0,
\qquad Q(\varphi_Ru_i,\varphi_Ru_i)\ge0.
\]
Combining the sign of $\mathscr H$ with Proposition~\ref{prop:energy} gives, for each $i$,
\begin{equation}\label{eq:vanishing-energy}
0\le Q(\varphi_Ru_i,\varphi_Ru_i)
\le\sum_{\ell=1}^3Q(\varphi_Ru_\ell,\varphi_Ru_\ell)
\le\frac{C^2}{R^2}\|\omega\|_{L^2}^2\longrightarrow0.
\end{equation}
There is no passage to the limit in an unsigned global boundary integral here.

For arbitrary $v\in C_c^\infty(\Sigma)$, decompose $v=v^\perp+e$ as in Lemma~\ref{lem:negative-space}. Cauchy--Schwarz on the nonnegative complement yields
\[
|Q(\varphi_Ru_i,v^\perp)|^2
\le Q(\varphi_Ru_i,\varphi_Ru_i)Q(v^\perp,v^\perp)
\longrightarrow0.
\]
Also $Q(\varphi_Ru_i,e)=0$. Once $R$ is large enough that $\varphi_R=1$ near $\supp v$, locality of the pairing gives
\[
Q(u_i,v)=Q(\varphi_Ru_i,v)\longrightarrow0.
\]
Therefore $Q(u_i,v)=0$ for all compactly supported smooth $v$. Tests supported in the interior imply $Ju_i=0$. Integration by parts with arbitrary compactly supported boundary traces then gives $Bu_i=0$. The functions $u_i$ are smooth up to the geometric boundary and belong to $L^2$ because $|u_i|\le|\omega|$. This proves the claim.

Rank--nullity now gives \eqref{eq:dimension-reduction}. More explicitly, apply the preceding construction to any finite-dimensional subspace of the harmonic-form space. Its dimension is at most $3I+\dim\KK$, which also proves finiteness when needed.
\end{proof}

\begin{proof}[Proof of Theorem~A]
Proposition~\ref{prop:compactification} supplies the compactification and at least one boundary puncture. Corollary~\ref{cor:hodge-dimension} gives
$\dim\HH^1_{(2),\mathrm{abs}}(\Sigma)=2g+b-1$.
Proposition~\ref{prop:vanishing} gives $\KK=\{0\}$. Applying Proposition~\ref{prop:index-reduction}, we obtain $2g+b-1\le3\Ind_s(\Sigma)$. Since the index is an integer, this is \eqref{eq:main}. The weak-index inequality follows from \eqref{eq:weak-strong}.
\end{proof}

\subsection*{The role of the exceptional harmonic forms}
For clarity, define the $L^2$ Jacobi kernel through compactly supported tests:
\[
\NN_{(2)}(J)=\{f\in L^2(\Sigma)\cap C^\infty(\Sigma):
Q(f,v)=0\text{ for every }v\in C_c^\infty(\Sigma)\},
\qquad \nu_{(2)}(J)=\dim\NN_{(2)}(J).
\]
Equivalently, $Jf=0$ and $Bf=0$. The definition does not require the separate global integrals defining $Q(f,f)$ to converge. The map
\[
\KK\longrightarrow\NN_{(2)}(J)^3,\qquad
\omega\longmapsto(u_1,u_2,u_3)
\]
is injective. Using only this fact in \eqref{eq:dimension-reduction} would give
\[
\Ind_s(\Sigma)+\nu_{(2)}(J)
\ge\tfrac13\dim\HH^1_{(2),\mathrm{abs}}(\Sigma)
\]
when the nullity is finite. The boundary argument proves the stronger statement $\KK=\{0\}$ under the assumptions of Theorem~A, regardless of the size of the full Jacobi kernel. It does not assert $\nu_{(2)}(J)=0$.

For comparison, Lemma~\ref{lem:constant} alone bounds $\dim\KK$ by one whenever the geometric boundary is nonempty. With compact boundary, this observation would yield only
$3\Ind_s(\Sigma)\ge\dim\HH^1_{(2),\mathrm{abs}}(\Sigma)-1$
by the present argument. This is an intermediate consequence of our method, not a replacement for the compact-boundary results of \cite{CMS26}. The boundary-end obstruction is the step that removes this remaining loss in the setting of Theorem~A.

\section{Interior ends and selected boundary ends}\label{sec:ends}
Set $G=2g+b-1$. Throughout this section, the metric is the original induced metric $\gamma$, and $s=|P_{\partial}|\ge1$. All meromorphic extensions below are on the conformal compactification or its double, not extensions of the minimal immersion across the supporting surface.

\subsection{The local geometry of interior ends}

The next result is local and follows essentially from the argument in \cite[Section~2.4]{CM23}.

\begin{lemma}\label{lem:interior-asymptotics}
For each interior puncture $p_i$, there is an integer $d_i\ge1$ and a conformal coordinate $z$ centered there such that
\[
\gamma=e^{2\lambda}|dz|^2,\qquad
 e^\lambda\asymp |z|^{-d_i-1}.
\]
If the end is embedded, then $d_i=1$. Every meromorphic one-form with pole order at most $d_i+1$ has bounded geometric norm near that puncture.
\end{lemma}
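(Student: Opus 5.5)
Near an interior puncture $p_i$ the boundary of $\Sigma$ plays no role, so the plan follows the classical Osserman--Jorge--Meeks analysis of finite total curvature ends, in the form used in \cite[Section~2.4]{CM23}. First, by Proposition~\ref{prop:compactification} we have $\int|A|^2\,dA<\infty$, and $p_i$ is an interior point of $\Sigmabar$. We therefore choose a conformal coordinate $z$ on a punctured disk $D^*$ around $p_i$ whose closure misses $\partial\Sigmabar$. The restriction of $F$ to $D^*$ is then a complete minimal annular end of finite total curvature. The Gauss map $N$ is antiholomorphic (or holomorphic, depending on orientation) into $S^2$. Its energy equals $\int|A|^2/2<\infty$, so by removable singularities for finite-energy harmonic maps into $S^2$ it extends continuously, indeed smoothly, across $0$.

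Second, we write the Weierstrass data $dF=\Rea\Phi$, with $\Phi=(\phi_1,\phi_2,\phi_3)$ holomorphic on $D^*$, and use the formula $e^{2\lambda}|dz|^2=\tfrac12|\Phi|^2$. After a rotation making $N(p_i)$ vertical, the Gauss map satisfies $g(z)=z^kh(z)$ with $h(0)\ne0$, or its reciprocal. The key step is to show that each $\phi_j$ has at worst a pole at $0$. For this we would use the Osserman argument. Completeness along curves into the puncture, together with finite total curvature $\int|K|=\int|\phi_3\,dg|^2/(\ldots)<\infty$ and the fact that $g$ has a limit, forces the height differential $\phi_3$ (and hence $\Phi$) to be meromorphic, not essentially singular.

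Third, we need the pole order. The period of $\Rea\Phi$ around the puncture vanishes because $F$ is single valued, so $\Rea$ of the residue is zero. If the maximal pole order of $\Phi$ were $\le1$, the metric would satisfy $e^\lambda\gtrsim|z|^{-1}$ at most, and $F$ would grow like $\log|z|$. A real-period-free $\Phi$ with a simple pole gives a bounded immersion near $0$, and a simple analysis then shows that the end has finite extrinsic extent and so is incomplete (the Jorge--Meeks step). Hence the pole order is $d_i+1$ with $d_i\ge1$, giving $|\Phi|\asymp|z|^{-d_i-1}$ and so $e^\lambda\asymp|z|^{-d_i-1}$. Here $d_i$ is the winding multiplicity of the end, read off from the Jorge--Meeks formula: $F$ is asymptotic to a $d_i$-fold cover of a planar or catenoidal end at infinity, with $|F|\sim|z|^{-d_i}$. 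Embeddedness of the end then forces $d_i=1$, since the intersection of the end with a large sphere is a curve converging to a $d_i$-fold covered great circle, and that curve is embedded only if $d_i=1$.

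Finally, let $\theta=f\,dz$ be a meromorphic form with $|f|\lesssim|z|^{-d_i-1}$. Its geometric norm is $|\theta|_\gamma=e^{-\lambda}|f|\lesssim|z|^{d_i+1}|z|^{-d_i-1}=O(1)$, so it is bounded near the puncture. We expect the main obstacle to be the second and third steps: ruling out an essential singularity of $\Phi$, and showing that the pole order is at least $2$ using only intrinsic completeness at an interior end. The plan is to cite these steps from \cite{CM23} and the classical Osserman theory rather than reprove them.
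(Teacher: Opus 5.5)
Your route is the paper's. You cite the classical finite-total-curvature end theory (Osserman, Jorge--Meeks, as in \cite[Section~2.4]{CM23}) to get meromorphic Weierstrass data $\Phi$ with maximal pole order $d_i+1\ge2$ and multiplicity $d_i$, with $d_i=1$ for embedded ends. You then read off $e^\lambda\asymp|z|^{-d_i-1}$ from $\gamma=\tfrac12|\Phi/dz|^2|dz|^2$ and the nonzero leading vector, and conclude $|\omega|_\gamma=e^{-\lambda}|f|=O(1)$. Your added detail in the first step is fine: the finite-energy extension of the Gauss map, followed by Osserman's lemma for the remaining Weierstrass factor.

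The sketch you give for excluding a simple pole, however, would fail as written.

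\emph{The period condition.} Single-valuedness of $F=\Rea\int\Phi$ gives $\Rea(2\pi i c)=0$ for the residue $c\in\C^3$. This means $c$ is \emph{real}, not that $\Rea c=0$.

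\emph{What a simple pole actually gives.} A real nonzero residue yields $F=c\log|z|+O(1)$, which is unbounded. The metric is $\gamma\asymp|z|^{-2}|dz|^2$, which is complete towards $0$ because $\int_0 dr/r=\infty$. So your claimed ``bounded immersion, hence incomplete'' conclusion is simply false for a simple pole, and completeness cannot be what rules it out.

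\emph{The correct exclusion.} The argument is algebraic. Conformality $\sum_j\phi_j^2=0$ forces the leading coefficient $a$ at any pole to satisfy $a\cdot a=0$. At a simple pole, $a=c$ is real, so $|c|^2=c\cdot c=0$, a contradiction. Completeness is needed only to exclude pole order $\le0$: a $\Phi$ holomorphic at $0$ gives a bounded conformal factor, so radial segments into the puncture have finite length.

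The same isotropy of $a$ is what gives $|F|\asymp|z|^{-d_i}$ in the Jorge--Meeks embeddedness step. If you simply cite this step, as the paper does, your proof is complete. If you sketch it, replace your third paragraph's argument with this one.
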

\begin{proof}
The end is a punctured disk, is complete towards its puncture, and has finite total curvature by \eqref{eq:finite-curvature}. The classical local finite-total-curvature end theorem gives meromorphic Weierstrass differentials with a pole of maximal order $d_i+1\ge2$; the end multiplicity is $d_i$, and an embedded end has multiplicity one. This is the local end description used in \cite[Section~2.4]{CM23}; it concerns a single end and does not require the other ends of $\Sigma$ to avoid the boundary. To see the metric estimate explicitly, write the vector of Weierstrass differentials as
\[
\Phi(z)=\bigl(a z^{-d_i-1}+O(z^{-d_i})\bigr)\,dz,
\qquad a\in\C^3\setminus\{0\}.
\]
Since $dF=\Rea\Phi$, conformality gives $\gamma=\tfrac12|\Phi/dz|^2|dz|^2$. The nonzero leading vector yields upper and lower bounds by positive multiples of $|z|^{-2d_i-2}$. If $\omega=\Rea(f\,dz)$ and $f=O(|z|^{-d_i-1})$, then $|\omega|_\gamma=e^{-\lambda}|f|=O(1)$.
\end{proof}

\subsection{Riemann--Roch with selected boundary poles}

Choose $S\subset\{1,\ldots,s\}$ and positive integers $n_j$,
$j\in S$. Let $\mathcal V$ be the real space of absolute harmonic
one-forms on $\Sigma$ whose local holomorphic representatives have
poles of order at most $d_i+1$ at $p_i$, poles of order at most
$n_j$ at $q_j$ for $j\in S$, and removable singularities at $q_j$
for $j\notin S$. Pole order zero means a removable singularity. Put
\[
L=2\sum_{i=1}^r(d_i+1)+\sum_{j\in S}n_j.
\]

\begin{lemma}\label{lem:meromorphic-dimension}
The space $\mathcal V$ is finite dimensional, with
\begin{equation}\label{eq:meromorphic-dimension}
\dim_{\R}\mathcal V=
\begin{cases}
G,&L=0,\\
G-1+L,&L>0,
\end{cases}
\qquad G=2g+b-1.
\end{equation}
\end{lemma}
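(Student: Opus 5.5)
The plan is to pass to the Schottky double $\widehat\Sigma$ of $\Sigmabar$ and apply Riemann--Roch there. $\widehat\Sigma$ is a compact connected Riemann surface of genus $\hat g=2g+b-1=G$, equipped with an antiholomorphic involution $\sigma$ whose fixed set is $\partial\Sigmabar$.

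As in the proof of Proposition~\ref{prop:hodge-extension}, an absolute harmonic one-form $\omega$ has the form $\omega=\Rea\theta$ for a holomorphic one-form $\theta$ on $\Sigma$, locally $\theta=f\,dz$ with $f=a-ib$. In a boundary chart the absolute condition says exactly that $f$ is real on the real axis. Schwarz reflection then extends $\theta$ to a form $\hat\theta$ on the double with $\sigma^*\overline{\hat\theta}=\hat\theta$. Poles of $\theta$ at interior punctures $p_i$ of order at most $d_i+1$ appear twice, at $p_i$ and at $\sigma(p_i)$. Poles at the boundary points $q_j$, $j\in S$, appear once, with order at most $n_j$. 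So $\mathcal V$ is isomorphic to the real subspace of $\sigma$-real elements of
\[
\Omega(D),\qquad D=\sum_i (d_i+1)\bigl(p_i+\sigma(p_i)\bigr)+\sum_{j\in S}n_jq_j,\qquad \deg D=L,
\]
where $\Omega(D)$ denotes the meromorphic differentials $\hat\theta$ with $(\hat\theta)\ge -D$. Conversely, $\Rea$ of a $\sigma$-real element restricted to $\Sigma$ is closed, coclosed and absolute. The map $\hat\theta\mapsto\Rea\hat\theta$ is injective on the real subspace because $\theta$ is recovered from $\Rea\theta$: indeed $\theta=\omega+i\star\omega$ up to sign convention. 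The involution $\hat\theta\mapsto\sigma^*\overline{\hat\theta}$ is a conjugate-linear involution of the complex space $\Omega(D)$. Its fixed subspace is therefore a real form, with real dimension equal to $\dim_\C\Omega(D)$.

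It remains to compute $\dim_\C\Omega(D)=\ell(K+D)$. If $L=0$, this is $\hat g=G$, which matches Corollary~\ref{cor:hodge-dimension}. If $L>0$, then $\deg(K+D)=2\hat g-2+L>2\hat g-2$, so Riemann--Roch gives
\[
\ell(K+D)=2\hat g-2+L-\hat g+1=\hat g-1+L=G-1+L .
\]
Finite dimensionality is automatic.

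The main point needing care is the identification of $\mathcal V$ with the $\sigma$-real subspace, in two respects.

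First, the pole bookkeeping at boundary points must be checked. The reflection of a pole of order $n$ at $q_j\in\partial\Sigmabar$ is still a single pole of order $n$ in a real boundary coordinate. Pole orders must also be measured in charts of the double, not merely on $\Sigmabar$. The real boundary charts of $\Sigmabar$ are charts of $\widehat\Sigma$, so this is consistent.

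Second, one must verify that the absolute condition and $\sigma$-reality are equivalent even near the poles. I would do this locally, exactly as in Proposition~\ref{prop:hodge-extension}: $f$ real on the punctured real axis gives a reflected meromorphic $\tilde f$ that agrees with the $\sigma$-real extension. Harmonicity of $\Rea\hat\theta$ away from the poles is immediate, since the form is locally the real part of a holomorphic form.
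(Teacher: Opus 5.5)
Your proposal is correct and follows essentially the same route as the paper. Both arguments pass to the double of $\Sigmabar$, which has genus $G$, and identify $\mathcal V$ with the fixed space of the antilinear involution $\alpha\mapsto\overline{\sigma^*\alpha}$ on $H^0(K+D)$ for the $\sigma$-invariant divisor $D$ of degree $L$; Riemann--Roch then gives the count, with the correction term vanishing because $\deg(-D)<0$ when $L>0$.
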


\begin{proof}
Let $X=\mathcal D\Sigmabar$ be the conformal double, with
antiholomorphic involution $\sigma$. Since the components along which
we double are circles,
\[
\chi(X)=2\chi(\Sigmabar)=4-4g-2b.
\]
Thus $X$ has genus $G=2g+b-1$. Each interior puncture $p_i$
gives two points $p_i^+,p_i^-$ exchanged by $\sigma$, while each
boundary puncture $q_j$ gives one point fixed by $\sigma$. Define
the effective $\sigma$-invariant divisor
\[
D=\sum_{i=1}^r(d_i+1)(p_i^++p_i^-)
  +\sum_{j\in S}n_jq_j.
\]
In particular,
\(
\deg D
=2\sum_{i=1}^r(d_i+1)+\sum_{j\in S}n_j=L.
\)
We first identify $\mathcal V$ with the fixed space of the
antilinear involution
\[
\mathcal T\alpha=\overline{\sigma^*\alpha}
\quad\text{on}\quad H^0(X,K_X+D).
\]
Away from the punctures, write an absolute harmonic form locally
as $\omega=\Rea(f(z)\,dz)$, where $f$ is holomorphic. In a boundary
chart with $\sigma(z)=\bar z$ and boundary given by $\Ima z=0$, the
absolute condition says that the normal component of $\omega$
vanishes. Since
\[
\Rea(f\,dz)=(\Rea f)\,dx-(\Ima f)\,dy,
\]
this is equivalent to $\Ima f(x)=0$ along the regular boundary.
Schwarz reflection therefore extends $f(z)\,dz$ to the other copy
by the rule $f(\bar z)=\overline{f(z)}$. The reflected local
differentials agree on overlaps, giving a differential $\alpha$
on the double with $\mathcal T\alpha=\alpha$.

The same argument applies in a punctured boundary chart centered
at $q_j$ because reflection across the two adjacent regular boundary
arcs gives a holomorphic differential on a punctured disk. Its
singularity at the center has order at most $n_j$ if $j\in S$
and is removable if $j\notin S$, precisely by the pole condition
in the definition of $\mathcal V$. At an interior puncture, the
representative on the original copy has order at most $d_i+1$;
reflection gives the same bound at its distinct lift on the other
copy. Hence $\alpha\in H^0(X,K_X+D)$.

Conversely, if $\alpha\in H^0(X,K_X+D)$ is fixed by $\mathcal T$,
then $\omega=\Rea\alpha$ on the original copy is harmonic. In a
boundary chart, invariance gives
$f(\bar z)=\overline{f(z)}$, so $f$ is real on the regular boundary
and $\omega$ has vanishing normal component there. The divisor
bound gives exactly the pole orders and removable singularities
specified in the definition of $\mathcal V$. These two
constructions are inverse. In fact, if $\Rea(f\,dz)=0$ on an open set, then
both the $dx$ and $dy$ coefficients vanish, and hence $f=0$.
Therefore,
\[
\mathcal V\cong_{\R}
H^0(X,K_X+D)^{\mathcal T}.
\]

For any complex vector space with an antilinear involution, its
fixed space has real dimension equal to its complex dimension.
Indeed, every $\alpha$ decomposes uniquely as
\[
\alpha=
\frac{\alpha+\mathcal T\alpha}{2}
+i\,\frac{\alpha-\mathcal T\alpha}{2i},
\]
where both terms before multiplication by $i$ are fixed by
$\mathcal T$. It remains to compute $h^0(X,K_X+D)$.

If $L>0$, Riemann--Roch yields
\[
h^0(X,K_X+D)=G-1+L+h^0(X,\mathcal O(-D))
            =G-1+L.
\]
Indeed, a section of $\mathcal O(-D)$ is a holomorphic function
on the compact connected surface $X$ that vanishes at a point
of the nonzero effective divisor $D$, and thus is identically
zero. If $L=0$, then $D=0$ and $h^0(X,K_X)=G$. The claimed
formula follows.
\end{proof}

The subtraction by one when $L>0$ is essential. In particular,
the first simple boundary pole need not increase the dimension, since
on a doubled sphere a meromorphic differential cannot have just
one simple pole with nonzero residue. Riemann--Roch incorporates
this global compatibility automatically.

\subsection{Local cutoffs and their global combination}
\begin{lemma}\label{lem:radial-capacity}
On a punctured disk or half-disk with metric $e^{2\lambda}|dz|^2$, let $m\ge1$ and set
\[
a_m(r)=r^{1-2m}\int_{\Theta}e^{-2\lambda(r,\theta)}\,d\theta,
\]
where $\Theta=[0,2\pi]$ or $[0,\pi]$, respectively. If
\begin{equation}\label{eq:capacity-condition}
\int_0^{r_0}\frac{dr}{a_m(r)}=\infty,
\end{equation}
there are radial cutoffs $\zeta_\ell$, zero near the puncture and equal to one outside neighborhoods shrinking to it, such that
\[
\int |\nabla\zeta_\ell|_\gamma^2|\omega|_\gamma^2\,dA_\gamma\longrightarrow0
\]
for every form whose holomorphic representative has pole order at most $m$. In particular, the lower bound $e^\lambda\ge C^{-1}r^{-m}$ implies this conclusion.
\end{lemma}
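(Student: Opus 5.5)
Since both the cutoff gradient energy and the form norm are conformally invariant, the whole computation can be carried out in the flat coordinate $z=re^{i\theta}$. For a one-form $\omega=\Rea(f\,dz)$ we have $|\omega|_\gamma=e^{-\lambda}|f|$, and for a function the Dirichlet density is conformally invariant: $|\nabla\zeta|_\gamma^2\,dA_\gamma=|\nabla_0\zeta|^2\,dx\,dy$. Hence
\[
|\nabla\zeta|_\gamma^2|\omega|_\gamma^2\,dA_\gamma=|\nabla_0\zeta|^2e^{-2\lambda}|f|^2\,dx\,dy .
\]
If the pole order is at most $m$, then $|f|\le C r^{-m}$ near the puncture. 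For a radial cutoff $\zeta=\zeta(r)$ the error is therefore bounded by
\[
C^2\int_0^{r_0}\zeta'(r)^2\,r^{-2m}\Bigl(\int_\Theta e^{-2\lambda}\,d\theta\Bigr)r\,dr
=C^2\int_0^{r_0}\zeta'(r)^2a_m(r)\,dr .
\]
The problem thus reduces to a one-dimensional weighted capacity: we must make $\int\zeta'^2a_m\,dr$ small with $\zeta=0$ near $0$ and $\zeta=1$ for $r\ge\delta_\ell\to0$.

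To do this, I would take the optimal capacity profile. Divergence of $\int_0^{r_0}dr/a_m$ means that for any fixed $\delta$ we may choose $\epsilon<\delta$ with
\[
M=\int_\epsilon^\delta\frac{dr}{a_m}
\]
as large as we like. Set $\zeta=0$ on $(0,\epsilon]$, $\zeta=1$ on $[\delta,r_0]$, and
\[
\zeta(r)=M^{-1}\int_\epsilon^r\frac{ds}{a_m(s)}\quad\text{on }[\epsilon,\delta].
\]
Then $\int\zeta'^2a_m=M^{-2}\int_\epsilon^\delta a_m^{-1}=1/M$. Choosing $\delta_\ell\to0$ and $\epsilon_\ell<\delta_\ell$ with $M_\ell\ge\ell$ gives $\zeta_\ell$ with error at most $C^2/\ell\to0$. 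This also makes the neighborhoods shrink to the puncture, which is what gluing into global cutoffs will need.

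Regularity has to be checked. $a_m$ is continuous and positive on $[\epsilon,\delta]$ because $\lambda$ is smooth away from the puncture, so $\zeta_\ell$ is Lipschitz and is admissible in Proposition~\ref{prop:energy}. The cutoff is radial, so it imposes no condition at the real axis in the half-disk case; Proposition~\ref{prop:energy} requires none. The constant $C$ depends on $\omega$, but that is harmless since the conclusion is for each fixed form.

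For the final assertion, $e^\lambda\ge C^{-1}r^{-m}$ gives $e^{-2\lambda}\le C^2r^{2m}$. Hence $a_m(r)\le 2\pi C^2 r$, so $1/a_m\ge c/r$ is not integrable at $0$, and \eqref{eq:capacity-condition} holds. I expect the main (mild) obstacle to be the reduction in the first paragraph: correctly tracking the conformal weights so that exactly the factor $e^{-2\lambda}r^{-2m}\cdot r$ appears, and bounding $|f|$ uniformly by $r^{-m}$ on the whole punctured (half-)disk. The latter follows because $z^mf$ is holomorphic, after reflection in the boundary case, and hence bounded near $0$.
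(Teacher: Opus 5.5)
Your proposal is correct and follows the paper's proof essentially step for step. It uses the same reduction, via $|\omega|_\gamma=e^{-\lambda}|f|$ and $|f|\le C_\omega r^{-m}$, to the one-dimensional weighted energy $\int\zeta'^2a_m\,dr$. It then takes the same optimal profile $\zeta=M^{-1}\int_\epsilon^r a_m^{-1}$ with energy $1/M$, and uses the same bound $a_m(r)\le C^2|\Theta|\,r$ in the power-law case.

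The paper additionally shows by Cauchy--Schwarz that $1/M$ is a lower bound over all admissible profiles. It also records an explicit logarithmic cutoff for the power-law case. Neither addition is needed for the conclusion.
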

\begin{proof}
Write $\omega=\Rea(f\,dz)$. Since $f$ has a pole of order at
most $m$, after reducing the coordinate neighborhood if necessary,
\(
|f(z)|\le C_\omega |z|^{-m}.
\)
For a radial function $\zeta=\zeta(r)$, the metric
$\gamma=e^{2\lambda}(dr^2+r^2d\theta^2)$ gives
\[
|\nabla\zeta|_\gamma^2=e^{-2\lambda}|\zeta'(r)|^2,
\qquad
|\omega|_\gamma^2=e^{-2\lambda}|f|^2,
\qquad
dA_\gamma=e^{2\lambda}r\,dr\,d\theta.
\]
So, if $\zeta'$ is supported in
$[\varepsilon,\delta]$, then
\begin{align*}
\int |\nabla\zeta|_\gamma^2|\omega|_\gamma^2\,dA_\gamma
&=\int_\varepsilon^\delta |\zeta'(r)|^2r
  \int_\Theta e^{-2\lambda(r,\theta)}
  |f(re^{i\theta})|^2\,d\theta\,dr\\
&\le C_\omega^2
  \int_\varepsilon^\delta|\zeta'(r)|^2a_m(r)\,dr.
\end{align*}

We next minimize this one-dimensional upper bound. The function
$a_m$ is positive and continuous on every compact subinterval
of $(0,r_0)$. For any absolutely continuous $\zeta$ satisfying
$\zeta(\varepsilon)=0$ and $\zeta(\delta)=1$,
Cauchy--Schwarz yields
\[
1
=\left|\int_\varepsilon^\delta\zeta'(r)\,dr\right|^2
\le
\left(\int_\varepsilon^\delta
|\zeta'(r)|^2a_m(r)\,dr\right)
\left(\int_\varepsilon^\delta a_m(r)^{-1}\,dr\right).
\]
Thus the radial energy is bounded below by
\[
A_{\varepsilon,\delta}^{-1},
\qquad
A_{\varepsilon,\delta}
:=\int_\varepsilon^\delta a_m(r)^{-1}\,dr.
\]
Equality holds for the increasing function
\[
\zeta_{\varepsilon,\delta}(r)
=
\frac{1}{A_{\varepsilon,\delta}}
\int_\varepsilon^r a_m(t)^{-1}\,dt,
\qquad \varepsilon\le r\le\delta,
\]
since its derivative is
$A_{\varepsilon,\delta}^{-1}a_m(r)^{-1}$.

Choose $\delta_\ell\downarrow0$. By
\eqref{eq:capacity-condition}, and because $a_m^{-1}$ is
integrable on compact subintervals of $(0,r_0)$,
\[
\int_0^{\delta_\ell}a_m(r)^{-1}\,dr=\infty.
\]
We may therefore choose $0<\varepsilon_\ell<\delta_\ell$ such that
$A_{\varepsilon_\ell,\delta_\ell}\ge\ell$. Extend
$\zeta_{\varepsilon_\ell,\delta_\ell}$ by zero for
$r\le\varepsilon_\ell$ and by one for $r\ge\delta_\ell$,
and denote the resulting function by $\zeta_\ell$.
It takes values in $[0,1]$ and is Lipschitz: its derivative is
bounded on the compact transition interval and vanishes elsewhere.
Moreover, $\zeta_\ell$ is eventually one on every compact subset
away from the puncture, and
\[
\int |\nabla\zeta_\ell|_\gamma^2|\omega|_\gamma^2\,dA_\gamma
\le \frac{C_\omega^2}{A_{\varepsilon_\ell,\delta_\ell}}
\le \frac{C_\omega^2}{\ell}\longrightarrow0.
\]

Finally, if $e^\lambda\ge C^{-1}r^{-m}$ uniformly in $\theta$,
then
\[
a_m(r)
=r^{1-2m}\int_\Theta e^{-2\lambda(r,\theta)}\,d\theta
\le C^2|\Theta|\,r.
\]
Hence $\int_0^{r_0}a_m(r)^{-1}\,dr=\infty$.
In this case one may also use the explicit logarithmic transition
\[
\zeta_\varepsilon(r)=
\frac{\log(r/\varepsilon^2)}{|\log\varepsilon|},
\qquad
\varepsilon^2\le r\le\varepsilon,
\]
extended by zero and one. Indeed,
\[
\int |\nabla\zeta_\varepsilon|_\gamma^2
|\omega|_\gamma^2\,dA_\gamma
\le
\frac{C_\omega^2C^2|\Theta|}{|\log\varepsilon|^2}
\int_{\varepsilon^2}^{\varepsilon}\frac{dr}{r}
=
\frac{C_\omega^2C^2|\Theta|}{|\log\varepsilon|}
\longrightarrow0.
\]
\end{proof}

\begin{remark}\label{rem:power-log}
More generally, a uniform lower bound $e^\lambda\ge\Lambda(r)$ suffices when
\[
\int_0^{r_0}r^{2m-1}\Lambda(r)^2\,dr=\infty.
\]
For $\Lambda(r)=r^{-\alpha}(\log(1/r))^\beta$, this holds if $m<\alpha$, or if $m=\alpha$ and $\beta\ge-1/2$. This is a sufficient criterion for the cutoff argument, not a claimed asymptotic classification of boundary ends.
\end{remark}

Next, we present a construction of mixed cutoff functions.

\begin{lemma}\label{lem:mixed-cutoffs}
Assume that every selected boundary end satisfies \eqref{eq:capacity-condition} for $m=n_j$. Then there is a sequence $0\le\Phi_\ell\le1$ of compactly supported Lipschitz functions, eventually equal to one on every compact subset of $\Sigma$, such that
\begin{equation}\label{eq:mixed-error}
\int_\Sigma|\nabla\Phi_\ell|^2|\omega|^2\,dA\longrightarrow0
\qquad\text{for every }\omega\in\mathcal V.
\end{equation}
No metric growth hypothesis is needed at the unselected boundary ends.
\end{lemma}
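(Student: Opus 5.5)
We build $\Phi_\ell$ as a product of local cutoffs, one factor per puncture of $\Sigmabar$, each handling one puncture; the compact part away from the punctures needs no cutoff. Fix disjoint coordinate neighborhoods $U_1,\dots,U_r$ of the interior punctures (punctured disks of radius $r_0$ in the coordinates of Lemma~\ref{lem:interior-asymptotics}) and $W_1,\dots,W_s$ of the boundary punctures (punctured closed half-disks). Their complement $\Sigmabar\setminus\bigcup U_i\cup\bigcup W_j$ is a compact subset of $\Sigma$. For each puncture we choose a Lipschitz function $\zeta^{(\cdot)}_\ell$ with values in $[0,1]$. It vanishes near the puncture, equals one outside a neighborhood shrinking to the puncture, and equals one on the outer part of the chart. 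We extend it by one to all of $\Sigma$. Then we set
\[
\Phi_\ell=\prod_{i=1}^r\zeta^{(p_i)}_\ell\prod_{j=1}^s\zeta^{(q_j)}_\ell .
\]
Each factor vanishes near its puncture. Hence $\supp\Phi_\ell$ lies in the complement of small coordinate neighborhoods of all punctures in $\Sigmabar$, which is compact in $\Sigma$ (it includes geometric boundary but no puncture). Each factor is eventually one on any compact $K\subset\Sigma$, since $K$ avoids a neighborhood of every puncture. So $\Phi_\ell$ is eventually one on $K$. The gradient supports of distinct factors are disjoint, so $|\nabla\Phi_\ell|^2=\sum|\nabla\zeta_\ell^{(\cdot)}|^2$, and it suffices to make each local error tend to zero for every $\omega\in\mathcal V$. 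Because $\mathcal V$ is finite dimensional by Lemma~\ref{lem:meromorphic-dimension}, and the error $\omega\mapsto\int|\nabla\Phi_\ell|^2|\omega|^2$ is a quadratic form, convergence on a basis suffices: use $|\omega|^2\le k\sum c_a^2|\omega_a|^2$ for $\omega=\sum_{a=1}^k c_a\omega_a$.

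\emph{Interior punctures.} By Lemma~\ref{lem:interior-asymptotics}, $e^\lambda\ge C^{-1}r^{-d_i-1}$, and forms in $\mathcal V$ have pole order at most $m=d_i+1$ there. So Lemma~\ref{lem:radial-capacity} on the full punctured disk applies directly and gives $\zeta^{(p_i)}_\ell$. Alternatively, we could use that $|\omega|_\gamma$ is bounded there, together with a logarithmic cutoff, whose $\gamma$-Dirichlet energy is conformally invariant and tends to zero.

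\emph{Selected boundary punctures.} For $j\in S$ the hypothesis is \eqref{eq:capacity-condition} with $m=n_j$ on the half-disk. Lemma~\ref{lem:radial-capacity} with $\Theta=[0,\pi]$ gives radial cutoffs with vanishing error. Being radial in a boundary chart, they need no boundary condition, since tests in $Q$ are unrestricted on $\partial\Sigma$.

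\emph{Unselected boundary punctures.} This is the step needing care, since no metric hypothesis is available. Here forms in $\mathcal V$ have removable singularities, so $\omega=\Rea(f\,dz)$ with $f$ bounded near $q_j$. The key is conformal invariance in dimension two: $|\nabla\zeta|_\gamma^2|\omega|_\gamma^2\,dA_\gamma=e^{-2\lambda}|\zeta'|^2|f|^2\,dx\,dy$, which still carries a factor $e^{-2\lambda}$. So a metric input is needed after all. I would handle this by arguing by completeness of $\gamma$ instead of radial cutoffs. Near $q_j$ we use the intrinsic distance cutoff $\chi(d_\Sigma(p,\cdot)/R)$ restricted to $W_j$, patched multiplicatively. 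Its error is at most $C^2R^{-2}\int_{W_j}|\omega|^2_\gamma\,dA_\gamma$, which is finite by Lemma~\ref{lem:conformal}, because $\omega$ is smooth on the compactification near $q_j$ and hence $L^2$ there. Completeness also ensures the distance balls are compact and exhaust $\Sigma$.

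Concretely, I would take $\Phi_\ell=\varphi_{R_\ell}\cdot\prod_{\text{pole punctures}}\zeta_\ell$ with $R_\ell\to\infty$. The global distance cutoff $\varphi_R$ gives compact support and handles all unselected ends, with error $\le 2C^2R^{-2}\int_{\Sigma\setminus\bigcup_{\rm pole}}|\omega|^2$. That integral is finite since $\omega$ is $L^2$ away from the pole punctures. The remaining factors handle the pole punctures. The product rule gives $|\nabla\Phi_\ell|^2\le2|\nabla\varphi_{R}|^2+2|\nabla\prod\zeta|^2$. On the support of $\nabla\varphi_R$ near pole punctures, $|\omega|^2$ is not integrable, but there the factor $\prod\zeta_\ell$ vanishes near the punctures. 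So we choose $R_\ell$ large depending on $\zeta_\ell$, making $R_\ell^{-2}\int_{\supp\prod\zeta_\ell}|\omega|^2\to0$ for the finitely many basis forms. This diagonal choice of $R_\ell$ after $\zeta_\ell$ is the main point to verify.
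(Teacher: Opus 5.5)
Your final construction $\Phi_\ell=\varphi_{R_\ell}\prod_{\text{pole}}\zeta_\ell$ is correct and is the paper's own argument. It uses radial cutoffs from Lemma~\ref{lem:radial-capacity} at the interior and selected boundary punctures, and the intrinsic distance cutoff for compact support and the unselected ends. Then $R_\ell$ is chosen after $\zeta_\ell$, using $\zeta_\ell\omega\in L^2$ from smooth extension at the regular boundary punctures, and a basis reduction finishes. One slip: your product-rule display should read $|\nabla\Phi_\ell|^2\le 2\bigl(\prod\zeta_\ell\bigr)^2|\nabla\varphi_R|^2+2|\nabla\prod\zeta_\ell|^2$, which is the form you actually use in the next sentence.
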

\begin{proof}
Call the interior punctures and the selected boundary punctures
\emph{active}. Choose pairwise disjoint coordinate neighborhoods of
these points. In each neighborhood, Lemmas~\ref{lem:interior-asymptotics}
and~\ref{lem:radial-capacity} provide cutoffs taking values in $[0,1]$,
vanishing near the puncture, and equal to one outside a neighborhood
shrinking to that puncture. These cutoffs depend only on the metric and
the allowed pole order, so the same sequence works for every
$\omega\in\mathcal V$.

Define $\zeta_\ell$ by these local cutoffs in the chosen neighborhoods
and by one elsewhere. The definitions agree near the boundaries
of the coordinate neighborhoods, so $\zeta_\ell$ is Lipschitz.
Its gradient is supported in the finitely many transition annuli.
Summing the local estimates therefore gives
\[
\int_\Sigma|\nabla\zeta_\ell|^2|\omega|^2\,dA
\longrightarrow0,
\qquad\text{for every }\omega\in\mathcal V.
\]
Moreover, $\zeta_\ell$ is eventually one on every compact subset
of $\Sigma$. If there are no active punctures, simply take
$\zeta_\ell\equiv1$.
The unselected boundary ends may prevent $\zeta_\ell$ from having
compact support. Nevertheless,
\[
\zeta_\ell\omega\in L^2(\Sigma)
\qquad\text{for each fixed }\ell.
\]
Indeed, this form vanishes near every possible pole. At each
remaining puncture, $\omega$ extends smoothly in the compactification
and is therefore locally square-integrable in a smooth compactified
metric. Conformal invariance gives the same integrability in the
induced metric. The region left after removing these puncture
neighborhoods is compact, where integrability is automatic.

Suppose first that $h=\dim\mathcal V>0$, and choose a real basis
$\omega_1,\ldots,\omega_h$. Let $\theta_R$ be the intrinsic cutoff
from \eqref{eq:cutoffs}. For each $\ell$, the number
\(
M_\ell=\sum_{a=1}^h
\|\zeta_\ell\omega_a\|_{L^2}^2
\)
is finite. We can therefore choose $R_\ell\ge\ell$ such that
\(
\frac{C^2M_\ell}{R_\ell^2}\le\ell^{-2}.
\)
Set
\[
\Phi_\ell=\theta_{R_\ell}\zeta_\ell.
\]
Then $0\le\Phi_\ell\le1$, and $\Phi_\ell$ is Lipschitz with compact
support contained in $\overline B_{2R_\ell}$.
Using the product rule, $0\le\theta_{R_\ell},\zeta_\ell\le1$,
and $|\nabla\theta_{R_\ell}|\le C/R_\ell$, we obtain
\begin{align*}
\int_\Sigma|\nabla\Phi_\ell|^2|\omega_a|^2\,dA
&\le
2\int_\Sigma|\nabla\zeta_\ell|^2|\omega_a|^2\,dA
+2\int_\Sigma
|\nabla\theta_{R_\ell}|^2|\zeta_\ell\omega_a|^2\,dA\\
&\le
2\int_\Sigma|\nabla\zeta_\ell|^2|\omega_a|^2\,dA
+\frac{2C^2}{R_\ell^2}
\|\zeta_\ell\omega_a\|_{L^2}^2\\
&\le
2\int_\Sigma|\nabla\zeta_\ell|^2|\omega_a|^2\,dA
+2\ell^{-2}
\longrightarrow0.
\end{align*}
For any fixed $\omega=\sum_{a=1}^h c_a\omega_a$,
the pointwise inequality
\[
|\omega|^2
\le
\left(\sum_{a=1}^h c_a^2\right)
\left(\sum_{a=1}^h|\omega_a|^2\right)
\]
then proves the required convergence for $\omega$.

Finally, on any fixed compact subset, $\zeta_\ell$ is eventually
one, and so is $\theta_{R_\ell}$ because $R_\ell\to\infty$.
Thus $\Phi_\ell$ is eventually one there.
If $\mathcal V=\{0\}$, the intrinsic cutoffs alone suffice.
The radii $R_\ell$ are chosen after $\zeta_\ell$, so no uniform
bound on $M_\ell$ is needed.
\end{proof}

\subsection{Comparison with the logarithmic weight}
The admissible interior orders agree with the weighted construction of \cite[Section~3]{CM23}. Their weight is
\[
W(F)=\frac{1}{(1+|F|^2)\log^2(2+|F|)}.
\]
At an interior end, $|F(z)|\asymp r^{-d}$, so conformal invariance gives, for a local representative $dz/z^\ell$,
\[
\int_E W(F)|\omega|^2\,dA
\asymp\int_0^{r_0}
\frac{r^{2d-2\ell+1}}{\log^2(1/r)}\,dr.
\]
At the critical order $\ell=d+1$, the integral is
$\int_0^{r_0}dr/(r\log^2(1/r))<\infty$, and thus the logarithmic factor is decisive. A cutoff transitioning over $R<|F|<R^2$ satisfies
\[
|\nabla\varphi_R|^2\le\frac{C}{|F|^2\log^2R}\le C'W(F)
\]
on that annulus, so weighted integrability controls its error by a tail integral. In this comparison, the norm of a complex differential may equivalently be replaced by the sum of the squared norms of its real and imaginary parts, and the harmless constant has no effect on integrability.

Our construction instead uses bounded geometric norms at active power-law ends and intrinsic cuts at regular punctures. It does not assert global properness of $F$ or weighted integrability at unselected boundary ends. The local finite-curvature description is available at interior ends; a boundary puncture alone does not provide its analogue. This is why Theorem~C states a separate condition only at the boundary ends whose poles are used.

\subsection{Index reduction and the exceptional space}
Let $\KK(\mathcal V)$ denote the forms in $\mathcal V$ whose three Euclidean components solve $Ju_i=0$ and $Bu_i=0$. These components need not lie in unweighted $L^2$.

\begin{proposition}\label{prop:meromorphic-index}
Under the hypotheses of Lemma~\ref{lem:mixed-cutoffs},
\[
\dim\mathcal V\le3I+\dim\KK(\mathcal V).
\]
Moreover, $\dim\KK(\mathcal V)\le1$. It is zero if an unselected boundary puncture exists, or if $|\omega|_\gamma\to0$ along a boundary arc approaching a puncture for every $\omega\in\mathcal V$.
\end{proposition}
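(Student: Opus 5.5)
The argument follows Proposition~\ref{prop:index-reduction}, with the intrinsic cutoffs $\varphi_R$ replaced by the mixed sequence $\Phi_\ell$ of Lemma~\ref{lem:mixed-cutoffs}. Fix the negative space $\EE=\spanop\{\psi_1,\ldots,\psi_I\}$ from Lemma~\ref{lem:negative-space}. Define $\mathcal F:\mathcal V\to\R^{3I}$ by $\mathcal F(\omega)=(Q(u_i,\psi_j))_{i,j}$. These pairings make sense because each $\psi_j$ has compact support and the $u_i$ are smooth on $\Sigma$.

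\textbf{Kernel of $\mathcal F$.} Take $\omega\in\ker\mathcal F$. For large $\ell$, $\Phi_\ell$ equals one near $\bigcup_j\supp\psi_j$. Hence $\Phi_\ell u_i\in\EE^{\perp_Q}$ and $Q(\Phi_\ell u_i,\Phi_\ell u_i)\ge0$. Proposition~\ref{prop:energy} applies to $\Phi_\ell$, which is compactly supported and Lipschitz. Together with $\mathscr H\ge0$ it gives
\[
0\le Q(\Phi_\ell u_i,\Phi_\ell u_i)\le\int_\Sigma|\nabla\Phi_\ell|^2|\omega|^2\,dA,
\]
and the right side tends to $0$ by \eqref{eq:mixed-error}. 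Now take $v\in C_c^\infty(\Sigma)$ and decompose it as in Lemma~\ref{lem:negative-space}. Cauchy--Schwarz on $\EE^{\perp_Q}$ gives $Q(u_i,v)=\lim_\ell Q(\Phi_\ell u_i,v)=0$. Therefore $Ju_i=0$ and $Bu_i=0$, so $\ker\mathcal F\subset\KK(\mathcal V)$. Rank--nullity gives $\dim\mathcal V\le 3I+\dim\KK(\mathcal V)$; $\mathcal V$ is finite dimensional by Lemma~\ref{lem:meromorphic-dimension}. Nothing here used $L^2$ integrability of $\omega$ beyond the cutoff error, which is why the forms with poles are admissible.

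\textbf{The bound $\dim\KK(\mathcal V)\le1$.} As the remark after Lemma~\ref{lem:constant} notes, that lemma does not use global $L^2$. Its proof only needs the pointwise equations $D_\eta X=qX$, tangency, and $\diver X=0$ on the regular boundary, plus Lemma~\ref{lem:boundary-uniqueness}. So on each boundary component $\Gamma$ we get $X=c_\Gamma T$, and $\omega\mapsto c_\Gamma$ is injective. Since $\partial\Sigma\ne\varnothing$, this gives $\dim\KK(\mathcal V)\le1$.

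\textbf{Vanishing cases.} Suppose first that an unselected boundary puncture $q_j$ exists. By definition of $\mathcal V$, every $\omega\in\mathcal V$ extends smoothly across $q_j$ in the compactification. Hence $\int|\omega(\alpha'(t))|\,dt<\infty$ along an adjacent boundary arc $\alpha$, exactly as in the first part of Lemma~\ref{lem:finite-integral}. By completeness, the second part of Lemma~\ref{lem:finite-integral} shows that $\alpha$ has infinite $\gamma$-length. Since $|\omega(\alpha')|=|c_\Gamma||\alpha'|_\gamma$, it follows that $c_\Gamma=0$, and then $\omega=0$, as in Proposition~\ref{prop:vanishing}.

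Suppose instead that $|\omega|_\gamma\to0$ along a boundary arc approaching some puncture. On that arc $|\omega|_\gamma=|X|=|c_\Gamma|$ is constant, so it can tend to zero only if $c_\Gamma=0$. Again $\omega=0$.

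\textbf{Main obstacle.} The hardest step is making sure the first part is valid for forms in $\mathcal V$ that may fail to be $L^2$. Concretely, one must check that $Q(\Phi_\ell u_i,\Phi_\ell u_i)$ is finite and that the identity of Proposition~\ref{prop:energy} holds for the compactly supported Lipschitz $\Phi_\ell$. Both follow because $\supp\Phi_\ell$ is compact and $\omega$ is smooth there. The remaining subtlety is only that the decomposition $v=v^\perp+e$ and the locality $Q(u_i,v)=Q(\Phi_\ell u_i,v)$ require $\Phi_\ell=1$ near $\supp v$. This holds eventually, since $\Phi_\ell$ is eventually one on every compact set.
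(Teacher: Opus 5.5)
Your proposal is correct and follows essentially the same route as the paper. Like the paper, you use the map $\mathcal F:\mathcal V\to\R^{3I}$, and then the mixed cutoffs $\Phi_\ell$ with the localized identity and Cauchy--Schwarz on $\EE^{\perp_Q}$ to get $\ker\mathcal F\subset\KK(\mathcal V)$. You also match the paper in using the local boundary-constancy argument for $\dim\KK(\mathcal V)\le1$, the finite-integral versus infinite-length argument at an unselected puncture, and the constant-norm argument for the decay case. The only step you compress is $Q(\Phi_\ell u_i,\Phi_\ell u_i)\le\sum_a Q(\Phi_\ell u_a,\Phi_\ell u_a)$; it holds because each summand is nonnegative, all $\Phi_\ell u_a$ lying in $\EE^{\perp_Q}$.
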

\begin{proof}
Let $\EE=\operatorname{span}\{\psi_1,\ldots,\psi_I\}$ be the negative
space from Lemma~\ref{lem:negative-space}. For
$\omega\in\mathcal V$, put $X=\omega^\sharp$ and
$u_i=\langle X,e_i\rangle$, and define
\[
\mathcal F:\mathcal V\longrightarrow\R^{3I},
\qquad
\mathcal F(\omega)=\bigl(Q(u_i,\psi_j)\bigr)_{i,j}.
\]
These pairings are well defined because the $\psi_j$ have compact
support.

Suppose that $\mathcal F(\omega)=0$. Write
$\omega_\ell=\Phi_\ell\omega$, where $\Phi_\ell$ is furnished by
Lemma~\ref{lem:mixed-cutoffs}. For all sufficiently large $\ell$,
$\Phi_\ell=1$ on a neighborhood of $\supp\EE$. By locality of $Q$,
\[
Q(\Phi_\ell u_i,\psi_j)=Q(u_i,\psi_j)=0
\]
for every $i,j$; thus $\Phi_\ell u_i\in\EE^{\perp_Q}$. The
nonnegativity of $Q$ on this space and the localized identity yield \eqref{eq:localized}
\[
0\le Q(\Phi_\ell u_i,\Phi_\ell u_i)
\le \sum_{a=1}^3 Q(\Phi_\ell u_a,\Phi_\ell u_a)
\le \int_\Sigma |\nabla\Phi_\ell|^2|\omega|^2\,dA
\longrightarrow 0.
\]

Let $v$ be an arbitrary compactly supported smooth test function.
By Lemma~\ref{lem:negative-space}, write
$v=v^\perp+e$, with $v^\perp\in\EE^{\perp_Q}$ and $e\in\EE$.
Cauchy--Schwarz for the nonnegative form $Q$ on $\EE^{\perp_Q}$
gives
\[
|Q(\Phi_\ell u_i,v^\perp)|^2
\le Q(\Phi_\ell u_i,\Phi_\ell u_i)\,Q(v^\perp,v^\perp)
\longrightarrow 0,
\]
while $Q(\Phi_\ell u_i,e)=0$. Since $\Phi_\ell=1$ near
$\supp v$ for large $\ell$, locality also gives
$Q(u_i,v)=Q(\Phi_\ell u_i,v)$. Hence $Q(u_i,v)=0$ for every such
$v$. Interior tests imply $Ju_i=0$, and integration by parts
against tests with arbitrary compactly supported boundary traces
implies $Bu_i=0$. Hence,
$\ker\mathcal F\subset\KK(\mathcal V)$, and rank--nullity proves
\[
\dim\mathcal V\le 3I+\dim\KK(\mathcal V).
\]

It remains to bound the exceptional space. The argument of
Lemma~\ref{lem:constant} is local and uses no global $L^2$
assumption. Thus, for every $\omega\in\KK(\mathcal V)$ and each
connected component $\Gamma$ of the geometric boundary,
\[
X=c_\Gamma T\quad\text{on }\Gamma
\]
for some constant $c_\Gamma$. For any fixed component $\Gamma$,
the map $\omega\mapsto c_\Gamma$ is injective. Indeed, if
$c_\Gamma=0$, then $\omega$ vanishes on an open boundary arc,
and boundary uniqueness forces $\omega=0$. In particular,
$\dim\KK(\mathcal V)\le1$.

Suppose first that $q_j$ is an unselected boundary puncture.
Every $\omega\in\mathcal V$ extends smoothly across $q_j$.
Choose an adjacent boundary arc $\alpha$ approaching $q_j$,
parametrized by a smooth coordinate of the compactification.
Smooth extension gives
\[
\int_\alpha|\omega(\alpha')|\,dt<\infty.
\]
On the other hand, completeness makes the induced length of
$\alpha$ infinite. Since $X=c_\Gamma T$ on the component
containing $\alpha$,
\[
|\omega(\alpha')|=|c_\Gamma|\,|\alpha'|_\gamma.
\]
It follows that $c_\Gamma=0$, and boundary uniqueness gives
$\omega=0$.

Alternatively, suppose that $|\omega|_\gamma\to0$ along a
boundary arc approaching a puncture for every
$\omega\in\mathcal V$. For an exceptional form,
$|\omega|_\gamma=|c_\Gamma|$ along that arc. Thus
$c_\Gamma=0$, and boundary uniqueness again gives $\omega=0$.
In either case, $\KK(\mathcal V)=\{0\}$.
\end{proof}

\begin{proof}[Proof of Theorems B and C]
For Theorem~C, consider the space $\mathcal V$ with poles of order
at most $d_i+1$ at each interior puncture, poles of order at most
$n_j$ at each selected boundary puncture, and removable
singularities at the remaining boundary punctures.
At a selected boundary end, the assumed lower bound gives
\[
a_{n_j}(r)
=r^{1-2n_j}\int_0^\pi e^{-2\lambda_j(r,\theta)}\,d\theta
\le \pi C_j^2r.
\]
Hence $\int_0^{r_0}a_{n_j}(r)^{-1}\,dr=\infty$.
The corresponding condition at each interior end follows from
Lemma~\ref{lem:interior-asymptotics}.
Thus Lemma~\ref{lem:mixed-cutoffs} provides a common sequence
of compactly supported cutoffs with vanishing error for
every form in $\mathcal V$.

If $L>0$, Lemma~\ref{lem:meromorphic-dimension} and
Proposition~\ref{prop:meromorphic-index} yield
\[
G-1+L=\dim_{\R}\mathcal V
\le 3I+\dim\KK(\mathcal V).
\]
When some boundary puncture is unselected, all forms in $\mathcal V$
extend smoothly there, so $\KK(\mathcal V)=\{0\}$.
When every boundary puncture is selected, the general estimate
$\dim\KK(\mathcal V)\le1$ applies. By the definition of
$\varepsilon_S$, we therefore obtain
\[
3I\ge G-1+L-\varepsilon_S.
\]
Together with $3I\ge G$ from Theorem~A, this gives
\[
3I\ge\max\{G,\ G-1+L-\varepsilon_S\},
\]
which is \eqref{eq:mixed-main} because $G=2g+b-1$.

Suppose now that, at some selected boundary puncture,
$e^{\lambda_j}\ge C_j^{-1}|z|^{-\alpha_j}$ with $\alpha_j>n_j$.
For $\omega=\Rea(f\,dz)\in\mathcal V$, the pole-order restriction
implies $|f(z)|\le C_\omega|z|^{-n_j}$, and hence
\[
|\omega|_\gamma=e^{-\lambda_j}|f|
\le C_jC_\omega|z|^{\alpha_j-n_j}\longrightarrow0.
\]
For an exceptional form, its dual vector field equals
$c_\Gamma T$ on the boundary component containing an arc
approaching this puncture. Its norm is therefore constant
along that arc, and the preceding decay forces $c_\Gamma=0$.
Boundary uniqueness gives $\omega=0$. Thus
$\KK(\mathcal V)=\{0\}$, and $\varepsilon_S$ may be replaced
by zero even when every boundary puncture is selected.

If $L=0$, there are no interior punctures and no selected
boundary punctures, since every allowed pole order is positive.
All forms in $\mathcal V$ therefore extend smoothly across
the punctures. Lemma~\ref{lem:meromorphic-dimension} gives
$\dim_{\R}\mathcal V=G$, and the existence of a boundary puncture
eliminates the exceptional space, recovering Theorem~A.

For Theorem~B, take $S=\varnothing$. No additional boundary
growth assumption is then required, and every form in
$\mathcal V$ is regular at all boundary punctures.
Since $s\ge1$, Proposition~\ref{prop:meromorphic-index} gives
$\KK(\mathcal V)=\{0\}$. If $r\ge1$, then
\[
L=2\sum_{i=1}^r(d_i+1)>0,
\]
so
\[
3I\ge G-1+L
=2g+b-2+2\sum_{i=1}^r(d_i+1).
\]
Taking the ceiling after division by three proves
\eqref{eq:interior-main}. Finally, embedded interior ends
have $d_i=1$ by Lemma~\ref{lem:interior-asymptotics}, giving
$3I\ge2g+b+4r-2$.
\end{proof}

The weak-index versions follow by replacing each strong-index lower bound $I\ge M$ by $\Ind_w\ge M-1$. In particular, no separate global weighted spectral theory is needed for these extensions; only pairings against a fixed compactly supported negative space are used.

\begin{remark}
Theorem~C remains valid with \eqref{eq:capacity-condition} in place of \eqref{eq:intro-growth}. Ends not selected contribute no pole order to $L$. Selecting all ends is not obligatory, and a regular boundary puncture can be more useful than a marginal additional pole because it eliminates the exceptional space. None of these estimates asserts vanishing of the full Jacobi nullity.
\end{remark}

\section{Examples and scope of the estimate}\label{sec:scope}
\begin{example}
In the Euclidean half-space $\Omega=\{x_3>0\}$, the surface
$\Sigma=\{(x_1,0,x_3):x_3\ge0\}$ is a complete free boundary minimal half-plane. Both $A$ and $q$ vanish, so $Q(f,f)=\int_\Sigma|\nabla f|^2\dd A$ and its strong index is zero. Its compactification is a disk with one boundary point removed, so $g=0$, $b=1$, and the harmonic-form dimension is zero. This example shows that the assumptions allow $H_{\partial\Omega}$ to vanish identically.
\end{example}

\begin{example}
Let $\Omega=\{0<x_3<1\}$ and
$\Sigma=\{(x_1,0,x_3):0\le x_3\le1\}$. This is complete, free boundary, minimal, and stable. The conformal map $z\mapsto e^{\pi z}$ sends the interior of the strip $0<\operatorname{Im}z<1$ to the upper half-plane; its two horizontal boundary lines map to the two intervals of the real axis separated by zero. A Cayley transform identifies the compactification with a closed disk with two boundary points removed. Thus $g=0$ and $b=1$, although $\partial\Sigma$ has two connected components. The dimension in \eqref{eq:hodge-dimension} is again zero.
\end{example}

\begin{example}\label{ex:strip-poles}
For the strip of the preceding example, the coordinate $w=e^{\pi z}$ gives
\[
\gamma=\frac{|dw|^2}{\pi^2|w|^2}
\]
near the boundary puncture $w=0$; the same expression holds near the other end using $1/w$. Thus $e^\lambda\asymp|w|^{-1}$. Simple poles satisfy Lemma~\ref{lem:radial-capacity}, but double poles do not satisfy its radial criterion. Indeed, for a double pole $a_2(r)\asymp r^{-1}$, so $\int_0^{r_0}a_2(r)^{-1}\,dr<\infty$. In particular, embeddedness alone does not imply \eqref{eq:intro-growth} with $n_j=2$. Assigning order two to both boundary ends in \eqref{eq:mixed-main} would incorrectly give $3I\ge1$, whereas the strip is stable. Allowing simple poles at both ends gives $L=2$, $G=0$, and $G-1+L-1=0$, consistently with stability. The form $dx_1$ spans the exceptional space in this example, so the possible one-dimensional loss is real.
\end{example}

\begin{example}\label{ex:fractional}
Fix $0<a\le1$ and define
\[
F_a(z)=(z+i)^a,\qquad \operatorname{Im}z\ge0,
\]
using the branch with $0<\arg(z+i)<\pi$. This is a conformal embedding of the closed upper half-plane onto a closed planar domain $D_a$. It is proper, since $|F_a(z)|=|z+i|^a\to\infty$ as $|z|\to\infty$. Its boundary is the smooth proper curve $\beta_a(t)=(t+i)^a$, and
\[
\frac{d}{dt}\arg\beta_a'(t)=\frac{1-a}{1+t^2},
\qquad
k_a(t)=\frac{1-a}{a}(1+t^2)^{-(a+1)/2}\ge0.
\]
To verify convexity globally, rotate the curve so that its tangent angles belong to the interval $[-(1-a)\pi/2,(1-a)\pi/2]$. Its first coordinate is strictly increasing and its tangent angle is nondecreasing; it is therefore a convex graph, with $D_a$ on its convex side. For $a=1$ the domain is a half-plane.

Set $\Omega_a=\operatorname{int}(D_a)\times\R$ and $\Sigma_a=D_a\times\{0\}$. The surface is complete because $D_a$ is closed and convex, and it meets the smooth mean-convex cylindrical support orthogonally. Its second fundamental form is zero and $q=0$, since its normal is the vertical cylinder direction. Thus $I=0$. Moreover,
\[
\int_{\partial\Sigma_a}\mathscr H\,ds
=\int_{-\infty}^{\infty}\frac{1-a}{1+t^2}\,dt
=(1-a)\pi.
\]
There is one boundary puncture, at $z=\infty$. With $\zeta=-1/z$,
\[
\gamma=a^2|\zeta|^{-2(a+1)}|1-i\zeta|^{2(a-1)}|d\zeta|^2.
\]
Hence the boundary exponent is $a+1\in(1,2]$, despite proper embeddedness, zero Gaussian curvature, and finite total support mean curvature along the boundary. For $a<1$, the sufficient power criterion admits order one but not order two. This example rules out an automatic integer-multiplicity description of general boundary ends; it does not rule out universal admissibility of simple poles.
\end{example}

\begin{proposition}\label{prop:reflection}
Suppose a boundary end, outside a compact truncation, is properly embedded, lies on one side of a plane $\Pi$, and has its entire geometric boundary in $\Pi$, meeting it orthogonally. Then the induced metric near its boundary puncture satisfies
\[
e^\lambda\asymp|z|^{-2}.
\]
Such an end can be selected with $n_j=2$ in Theorem~C.
\end{proposition}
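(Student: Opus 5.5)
Reflect the end across $\Pi$ and reduce to the classical theory of embedded finite-total-curvature interior ends (Lemma~\ref{lem:interior-asymptotics}).

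\emph{Step 1: reflection.} Let $E$ be the truncated end and $\rho_\Pi$ the Euclidean reflection across $\Pi$. Since $\partial E\subset\Pi$ and $E$ meets $\Pi$ orthogonally, $E\cup\rho_\Pi(E)$ is a $C^1$ surface across $\Pi$. It is minimal on both sides, so the Schwarz reflection principle for minimal surfaces makes it a smooth minimal surface $\widehat E$. Because $E$ lies on one side of $\Pi$ and is properly embedded, $\widehat E$ is properly embedded away from the compact truncation. Conformally, the boundary half-disk chart at $q_j$ doubles to a punctured disk: the reflection of the coordinate is $z\mapsto\bar z$ and the immersion satisfies $F(\bar z)=\rho_\Pi F(z)$. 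The puncture becomes an interior puncture of $\widehat E$. The induced metric of $\widehat E$ restricts to $\gamma$ on the upper half-disk, so the asymptotics of $e^\lambda$ on the half-disk are those of the doubled end.

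\emph{Step 2: finite total curvature and completeness.} By \eqref{eq:finite-curvature}, $\int_E|A|^2\,dA<\infty$, so $\widehat E$ has twice this total curvature, which is finite. Completeness of $\widehat E$ towards the puncture follows from completeness of $\Sigma$ including its boundary: a divergent path in $\widehat E$ that stays bounded reflects to a divergent bounded path in $E$. Hence $\widehat E$ is a complete, embedded, finite-total-curvature annular end.

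\emph{Step 3: asymptotics.} Applying Lemma~\ref{lem:interior-asymptotics} to this single end, which that lemma notes is purely local, gives $d=1$ by embeddedness. Thus $e^\lambda\asymp|z|^{-2}$ on the punctured disk, and in particular uniformly on the closed half-disk. Then \eqref{eq:intro-growth} holds with $n_j=2$ and $C_j$ the comparison constant, so Theorem~C, or Lemma~\ref{lem:radial-capacity} with $m=2$, admits the selection $n_j=2$.

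The main obstacle is Step 3's use of embeddedness. The classical argument for $d=1$ uses embeddedness of the whole end, and that needs Step 1: the reflected half must not cross the original. The one-sided hypothesis gives exactly this, since the two halves meet only along $\Pi$, where they agree. One also has to check that the local Weierstrass description applies to $\widehat E$ although only the half-end is given. I would verify this via Osserman's theorem on the doubled punctured disk, which needs only completeness and finite total curvature. The leading coefficient $a\in\C^3\setminus\{0\}$ then gives the two-sided bound.
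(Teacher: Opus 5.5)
Your proposal is correct and follows the paper's proof essentially step by step. Both reflect the end across $\Pi$, get embeddedness of the double from one-sidedness, inherit completeness and (doubled) finite total curvature, and invoke the classical local description of embedded finite-total-curvature ends to obtain $d=1$, hence $e^\lambda\asymp|z|^{-2}$ and admissibility of $n_j=2$. One point is tacit in both your version and the paper's: your claim that the halves meet only along the boundary curve in $\Pi$ needs the interior of the end to avoid $\Pi$, which follows from the strong maximum principle for the nonnegative harmonic height function over $\Pi$.
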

\begin{proof}
Reflection in $\Pi$ extends the end to a smooth minimal end without geometric boundary. The two interiors lie in opposite open half-spaces, so the reflected end is embedded. It is complete towards its puncture and has finite total curvature, since reflection doubles the curvature integral of the original end. The double of its punctured half-disk parameter domain is a punctured disk. The classical embedded-end description, as in Lemma~\ref{lem:interior-asymptotics}, gives multiplicity one and metric factor comparable to $|z|^{-2}$. Restriction to one half proves the assertion. Reflection of complete free boundary surfaces in a half-space is discussed in \cite[Section~2]{Chen21}; only this local end argument is needed here.
\end{proof}

If $s_{\mathrm{ref}}$ ends satisfy Proposition~\ref{prop:reflection}, they contribute $2s_{\mathrm{ref}}$ to $L$. No planar condition is imposed on unselected ends. Both margins of a selected end must lie in the same plane: the two parallel supporting planes of a strip do not meet this hypothesis. An asymptotically planar support, without quantitative control, is not covered by the proposition.

More generally, if a boundary circle of $\Sigmabar$ contains $m\ge1$ punctures, its complement has $m$ connected components, whereas a boundary circle with no punctures remains one component. This explains why replacing $b$ by the number of components of $\partial\Sigma$ would give an incorrect topological interpretation; the stable strip already rules out that replacement in \eqref{eq:main}.

For strong index $I$, Theorem~A yields $2g+b\le3I+1$. In particular, strong stability forces $g=0$ and $b=1$. This is a statement about the compactification, not a geometric classification: Theorem~A alone does not determine the number of punctures. Theorems~B and~C provide additional restrictions when interior ends or admissible boundary poles are present. Likewise, it does not classify equality or establish that the factor $1/3$ is optimal.

\section*{Funding}
The first author was partially supported by the Brazilian National Council for Scientific and Technological Development (CNPq) [Grants 402563/2023-9 and 304381/2026-8 to M.B.], and was supported by the Coordination for the Improvement of Higher Education Personnel (CAPES), Finance Code 001.

\section*{Data availability statement}
This manuscript has no associated data.
\section*{Conflict of interest statement}
The authors declare that they have no conflict of interest.

\section*{AI disclosure}
The authors used a generative AI tool to assist with language editing and to discuss the presentation of some arguments. The authors independently checked the mathematical statements, proofs, and references and take full responsibility for the content of this article.

\end{document}